\newtheorem{pprop}{Proposition}
\newtheorem{teo}[pprop]{Theorem}
\newtheorem{plem}[pprop]{Lemma}
\newtheorem{pclaim}{Claim}
\theoremstyle{definition}
\newtheorem{defi}[pprop]{Definition}
\newtheorem{pnot}[pprop]{Notation}
\title{Groups Definable in Presburger Arithmetic}
\author{Juan Pablo Acosta L\'opez}
\date{}
\begin{document}
\maketitle
\begin{abstract}
Here we give a complete list of the groups definable in Presburger arithmetic
 up to a finite index subgroup.
\end{abstract}
\section{Introduction}
In \cite{definable-bounded-groups-in-Z}, 
it is proven that 
groups definable in Presburger arithmetic are abelian-by-finite.
Additionally, bounded groups are described completely.
The main theorem here is Theorem \ref{main-def-in-Z-2}, which
describes completely all groups definable in Presburger
arithmetic.

We give now an overview of the proof. We prove a version of the Ellis-Nakamura Lemma
which works simultaneously for semigroups 
over definable bounded sets of Presburger arithmetic and over profinite topological spaces.
The statement of the hypothesis uses the concept of a pro-definable set.
This is used to find an idempotent in a space of types obtained from a definable group.
In this idempotent the associativity constraint forces the group structure to be 
the familiar $(Z^r,+)$. This group can be recovered globally (that is, not just on the type)
 using differences in the
given definable group in a process similar to the groupification,
so we obtain an injective group map $Z^r\to G$ with
bounded cokernel, see Theorem \ref{main-def-in-Z}. 
Finally we use group cohomology to calculate the isomorphism type of the group extension,
by choosing a complete type, where the cycle associated to the extension 
is an affine function, 
so the cycle condition is seen to be trivial,
 and then use differences and an observation about
extensions of local group morphisms to global group morphisms on $\mathbb{R}$ 
to obtain a group section sufficient to determine $G$.
\section{The degree of unboundedness}
In what follows $Z$ will denote an $\omega$-saturated $Z$-group.
We will use the shorthand 
$x\ll y$ for $x,y\in Z$ to mean $0<x<y$ and $nx<y$ for all 
$n\in \mathbb{Z}_{>0}$. If $a$ is a finite tuple then
$|a|={\rm Max}_i|a_i|$. If $x\in Z^r$ then
$a\ll x$ means 
$|a|\ll x_1\ll x_2\ll\dots\ll x_r$.
We shall call a $x\in Z^r$ divisible if $x\in nZ^r$ for all $n\in\mathbb{Z}_{>0}$.
More generally
for $p\in\hat{\mathbb{Z}}^r$ and $x\in Z^r$ define
$x\equiv p$ or 
$\hat{x}=p$ if and only if $x_i\equiv p_i \mod N$ for every $N\in\mathbb{Z}_{>0}$,
under the isomorphism $Z/NZ\cong \hat{\mathbb{Z}}/N\hat{\mathbb{Z}}
\cong \mathbb{Z}/N\mathbb{Z}$.

The letter $t$ usually denotes a tuple with components in $Z$, which will
usually be taken to be parameters. 
For instance if $t\in T\subset Z^s$ is a $0$-definable set, 
a family $\{S_t\}_{t\in T}$ of sets $S_t\subset Z^r$ is called $0$-definable
if $S=\bigcup_{t\in T} S_t\times \{t\}\subset Z^r\times T$ is $0$-definable.
If $X$ is a $t$-definable set, say it is defined by a formula 
$\phi(x,t)$, we sometimes denote $X=X_t$ where $X_{t'}$ is the set
defined by the formula $\phi(x,t')$. 
Note that $\{X_t\}_{t\in Z^s}$ is a $0$-definable family.

We note that for Presburger arithmetic there is a quantifier elimination and a 
notion of dimension
of definable sets see \cite{cdZ} for the definition and properties.

We shall make use of cell decomposition for $Z$, see
\cite{cdZ}. We provide the statement of this theorem next.
\begin{defi}
We define the notion of an 
$a$-definable cell $C\subset Z^n$ by induction
on $n$.
If $C\subset Z^n$ is an $a$-definable cell
then $D\subset Z^{n+1}$ is said to be an $a$-definable cell if
it is of one of the following forms, first
$D=\{(x,y)\in Z^n\times Z \mid x\in C, 
f(x)<y<g(x), y\equiv k\mod N\}$
for a $N\in\mathbb{Z}_{>0}$ and $k\in\mathbb{Z}/N\mathbb{Z}$, and
$f,g:C\to Z$ given by $f(x)=A_1x+c_1$
and $g(x)=A_2x+c_2$, for $A_i,B_i$ rational
matrices, $c_i$ $a$-definable constants, and $C$ satisfying
conditions of divisibility that make the image of $f$ and $g$ 
belong to $Z$, and also such that
$g(x)-f(x)>N$.
We also accept as cells those where $f$ or $g$ do not appear.
\end{defi}
This definition is somewhat different from the usual one, which
requires $f-g$ to be unbounded, or equal to $2$, but for our purposes
it is enough.

\begin{pprop}
If $A_{1},\dots,A_{r}$ are $a$-definable sets which
form a partition of $B$ and $f:B\to Z^n$ is an $a$-definable function
then there exists an $a$-cell decomposition of $B$
$C_{1},\dots,C_{s}$
such that $C_{i}$ refine $A_{j}$ and 
$f$ restricted to each $C_{i}$ is of the form
$x\mapsto Ax+c$ for $A$ a matrix with rational coefficients
and $c$ an $a$-definable constant.
\end{pprop}

Now we define an invariant $r={\rm ubd}(X)$ for every definable set $X$,
the ``degree of unboundedness'' to be the greatest $r\in\mathbb{N}$ such that
there exists $f:Z^r\to X$ injective. We prove later that 
$X$ is bounded
if and only if ${\rm ubd}(X)=0$.

We start with a simple remark
\begin{plem}\label{ubd+}
For a definable set $X$, ${\rm ubd}(X)$ is also the greatest $r$ such that there is an
injective definable function $Z_{>0}^r\to X$. 
In fact, $Z$ is in definable bijection with $Z_{>0}$.
\end{plem}
\begin{proof}
This follows by considering even and odd
numbers. More precisely $f:Z\to Z_{>0}$ is defined as $f(x)=2x$ if $x>0$ and 
$f(x)=1-2x$ if $x\leq 0$.
\end{proof}
\begin{plem}\label{ubdlema2}
Let $c$ be a tuple of elements in $Z$.
Let $C$ be a definable set containing the 
set of $x\in Z^r$ such that $c\ll x$ and $x$ is divisible.
Then there exists an injective definable map $Z^r\to C$.
\end{plem}
\begin{proof}
By compactness there exists $d\in Z_{>0}$, 
$n_1,\dots,n_{r-1}\in\mathbb{Z}_{>0}$, and $N\in \mathbb{Z}_{>0}$
such that $C$ contains the set of $x$ such that $d<x_1, n_1x_1<x_2,\dots, n_rx_{r-1}<x_r$,
and $x_i\in NZ$. 
Then the map $f:Z_{>0}^r\to C$ given by $x\mapsto y$ where
$y_1=Nd+Nx_1$ and $y_i=n_iy_{i-1}+Nx_i$, is injective definable. We conclude
with Lemma \ref{ubd+}.
\end{proof}
\begin{plem}\label{ubdlema3}
Let $c$ be a finite tuple in $Z$.
If $X$ is the set of elements $x\in Z^r$ such that $c,1\ll x$ and $x$ is divisible, then
$X$ is the set of realizations of a complete type over $c$. In particular if
$f:X\to Z^s$ is $c$-definable then there is a constant $d$ and a rational matrix 
$A$ with $f(x)=Ax+d$ for all $x\in X$.
\end{plem}
\begin{proof}
By elimination of quantifiers we have to see that if $x,y\in X$ then for every
$n\in\mathbb{Z}^r$ and $m\in\mathbb{Z}^s$, $l,N\in \mathbb{Z}$, we have
$nx+mc+l>0$ if and only if $ny+mc+l>0$ and $nx+mc+l\equiv ny+mc+l\mod N$.
The second follows because $x,y$ are divisible.
The first follows trivially if $n=0$. If $n\neq 0$ then $n_kx_k$ is the dominant term
of $nx+mc+l$, if $n_k$ is the first nonzero term of $n$ from right
to left. So $nx+mc+l>0$ is equivalent to $n_k>0$, and similarly for $y$, as required.
\end{proof}
\begin{plem}\label{ubdlema}
If $X\subset Z^t$ 
is a definable set, then there exist $X_i$ disjoint definable sets such that
$X=X_1\cup\dots\cup X_p$ and for each $i$ there exists $r_i\in\mathbb{Z}_{\geq 0}$,
a bounded cell $C_i$ with $C_i\subset Z^{t-r_i}$, and a bijection
$f_i:Z_{>0}^{r_i}\times C_i\to X_i$, which are of the form
$f_i(x)=A_ix+b_i$ for $A_i$ matrices with integer coefficients and $b_i$
tuples in $Z^t$.
\end{plem}
\begin{proof}
This proof is by induction on $t$.
The base case is formally $t=0$ which is trivial. Alternatively the proof
of the induction step establishes $t=1$.

So assume the lemma is true for $t$ and we prove it for $t+1$.
Let $X\subset Z^{t+1}$ be a definable set. Using cell decomposition we may assume
$X$ is a cell. In other words, there is a cell $Y\subset Z^t$ such that
$X=\{(a,z)\mid a\in Y, g(a)<z<f(a), z\equiv s\mod N\}$, for affine functions $g,f$ such that $g(a)+N<f(a)$ for all $a\in Y$,
or where $f$ or $g$ do not appear.

Applying the bijection $z\mapsto Nz+s$ we may assume $N$ does not appear in the definition
of $X$. If $M$ is the largest denominator of the rational numbers which 
appear as coefficients of $f,g$; then after 
dividing $X$ into $M^t$ sets according to the residue of $a$ mod $M$, and
applying the bijections 
$a\mapsto Ma+i$ for $i\in [0,M)^t$ we may further assume that $f,g$ have integer 
coefficients.

Now we apply the induction hypothesis to $Y$ to assume $Y=Z_{>0}^r\times B$, for
a bounded cell $B\subset Z^{t-r}$.
We write $X=\{(a,b,z)\mid g(a,b)<z<f(a,b), a\in Z_{>0}^r, b\in B\}$.

If $f$ does not appear but $g$ does we use the bijection $(a,z,b)\mapsto (a,b,z+g(a,b))$
from $Z_{>0}^{r+1}\times B$ to $X$.

If $g$ does not appear but $f$ does we use the bijection 
$(a,z,b)\mapsto (a,b,f(a,b)-z)$ from $Z_{>0}^{r+1}\times B$ to $X$.

If neither $f$ and $g$ appear then we divide $X$ into the sets with $z>0$ and 
$z<1$. Both of these we know how to handle.

So now we assume both $f$ and $g$ appear.
Using the function $(a,b,z)\mapsto (a,b,z+g(a,b))$ we may assume $g=0$.
Write $f(a,b)=na+mb+c$ where $n,m$ are $1\times t$ 
matrices with integer coefficients, in other words, they are tuples.
If $n=0$, then $X=Z_{>0}^r\times B'$ with $B'$ a bounded cell of the form
required.

Assume $n\neq 0$. Then after a permutation in the $Z_{>0}^r$ factor we assume $n_1>0$. 
Dividing by $n_1$ and taking classes mod $n_1$ as in the first reduction we may assume $n_1=1$. 
Denote $a=(a_1,a')$ and $n=(1,n')$. Now we divide $X$ into two disjoint 
subsets $X_1$ and $X_2$ defined as follows.
A tuple $(a,b,z)\in X_1$ if $a\in Z_{>0}^r$, $b\in B$, $z\in Z_{>0}$ 
and $z-n'a'-mb-c\leq 0$.
A tuple $(a,b,z)\in X_2$ if $a\in Z_{>0}^r$, $b\in B$, $z\in Z_{>0}$ 
and $0<z-n'a'-mb-c<a_1$.
For $X_1$ we have that $X_1=Z_{>0}\times T$ for $T\subset Z^t$ so we are done
by induction.
For $X_2$ we have the bijective affine map $(a',z,b,s)\mapsto (z-n'a'-mb-c+s,a',b,z)$,
from $R\times Z_{>0}$ to $X_2$. 
Where $R=\{(a',z,b)\mid a'\in Z_{>0}^{r-1}, b\in B, z\in Z_{>0}, 
0<z-n'a'-mb-c\}$ is a definable set such that
 $R\subset Z^{t}$, so here we also are done by induction.
\end{proof}
\begin{pprop}\label{ubdmain}
If $X$ is a definable set then ${\rm ubd}(X)\leq r$ if and only if $X$ is in definable bijection with some definable set
$X'\subset Z_{>0}^r\times [0,a)^s$.
\end{pprop}
\begin{proof}
Assume first ${\rm ubd}(X)\leq r$.
By the Lemma \ref{ubdlema} we have $X=X_1\cup \dots \cup X_p$, $r_i$, $C_i$ and $f_i$ as in
the statement of the lemma. By Lemma \ref{ubd+} we have that $r_i\leq r$. From this the
result follows.

Assume now that $X\subset Z_{>0}^r\times [0,a)^s$. We want to see that 
${\rm ubd} (X)\leq r$.
Assume now towards a contradiction
that $f:Z^{r+1}\to Z^{r}\times [0,a)^s$ is injective.
If $\pi:Z^r\times [0,a)^s\to [0,a)^s$ is the projection, then there is a constant $b$
and a matrix $A$ with rational coefficients such that
$\pi f(x)=Ax+b$ for all $c,1 \ll x$ divisible, where $c$ are defining parameters for $f$, see
Lemma \ref{ubdlema3}.
But as $f$ has a bounded image, then
necessarily $A=0$. So $\pi f$ is constant on a cell containing such $x$
which, by Lemma \ref{ubdlema2},
contains an injective image of $Z^{r+1}$.
We obtain an injective $g:Z^{r+1}\to Z^r$. This is not possible for dimension
reasons.
\end{proof}
\begin{pprop}\label{ubd}\strut
\begin{enumerate}
\item $X\subset Z^r$ has ubd 0, if and only if $X$ is bounded.
\item ${\rm ubd}(X)\leq {\rm dim}(X)$. 
\item ${\rm ubd}(Z^r)=r$.
\item If $X=Y\cup Z$ then ${\rm ubd}(X)={\rm Max}\{{\rm ubd}(Y),{\rm ubd}(Z)\}$
\item ${\rm ubd}(X\times Y)={\rm ubd}(X)+{\rm ubd}(Y)$.
If $H\leq G$ are groups then 
${\rm ubd}(G)={\rm ubd}(H)+{\rm ubd}(G/H)$.
\item If $a$ is a tuple with components in $Z$, and 
$\{X_r\}_r$ is an $a$-definable family, then the set
$\{r\mid {\rm ubd}(X_r)=n\}$ is $a$-definable.
\end{enumerate}
\end{pprop}
\begin{proof}
1) If $X$ is not bounded, by 
definable choice there is definable
$f:Z_{\geq 0}\to X$ such that $f(t)\in X\setminus [-t,t]^r$.
By cell decomposition there are $a\in Z$ and $N\in\mathbb{Z}_{>0}$
such that if $x>a$ and $N|x$, then 
$f(x)=mx+b$. If $m=(m_1,\dots,m_n)$ then $m_i\neq 0$ for all $i$.
Then $g(x)=f(a+Nx)$ is injective from $Z_{> 0}\to X$.
We conclude by Lemma \ref{ubd+}.

\medskip
 
2) If ${\rm ubd}(X)\geq r$, then there is an injective definable map
$Z^r\to X$, so by properties of dimension 
$r={\rm dim}(Z^r)\leq {\rm dim}(X)$.

\medskip

3) It is clear from the definition that ${\rm ubd}(Z^r)\geq r$.
From item 2) we get the other inequality.

\medskip

4) It is clear from the definition that
${\rm ubd}(X)\geq{\rm Max}\{{\rm ubd}(Y),{\rm ubd}(Z)\}$.
By the previous proposition we get the other inequality.

\medskip

5) Similarly, from the definition
 ${\rm ubd}(X\times Y)\geq {\rm ubd}(X)+{\rm ubd}(Y)$,
and the previous proposition gives the other inequality.

By definable choice
$G\cong H\times G/H$ as a definable set.

\medskip

6) The set $A_n$ defined by $r\in A_n$ if and only if ${\rm ubd}(X_r)\geq n$
is $a$-$\lor$-definable, this follows from the definition. Indeed, if $r\in A_n$, then
there is an $atr$-definable injective function $f_{atr}:Z^n\to X_r$, where $t$ are some
defining parameters,
so $A_n$ contains the $a$-definable set of those $r'$ such that there exists
$t'$ such that
$f_{ar't'}$ is an injective definable function $Z^n\to X_{r'}$.
Similarly from the previous proposition the set $B_n$ of $r$ such
that ${\rm ubd}(X_r)\leq n$ is $a$-$\lor$-definable.
So $C_n=A_n\cap B_n$ is also $a$-$\lor$-definable.
If $X_r\subset Z^N$, then
$C_n$ is also the intersection of the complements of $C_s$ for
$0\leq s\leq N$ and $s\neq n$, so it is also $a$-$\land$-definable.
So by compactness $C_n$ is $a$-definable.
\end{proof}
\section{Idempotents in prodefinable semigroups}
As motivation for Lemma \ref{idempotent2} recall that a nonempty 
compact Hausdorff
topological semigroup has an idempotent, (this is the 
Ellis-Nakamura Lemma, see \cite{idempotent} Lemma 1),
 and because having an idempotent is first order expressible 
a nonempty bounded
definable semigroup in $(Z,+,<)$ also has one.
The hypothesis of Lemma \ref{idempotent2} includes both situations.

For the Lemma \ref{idempotent2} we need some terminology.
A reference for this material is \cite{prodef}, but with a small variant,
we keep track of the cardinality of the index set
for saturation reasons.

The category of $A$-$\kappa$-pro-definable sets is the category
with objects
the diagrams of $A$-definable
sets $X_i$ indexed by a directed set $i\in I$ with intermediate maps
$X_i\to X_j$ $A$-definable, and 
the cardinality of $I$ $\leq \kappa$. 
This object is denoted $X=\lim_i X_i$. 
For the $A$-$\kappa$-pro-definable sets $X=\lim_i X_i$ and $Y=\lim_j Y_j$ the set of morphisms 
of $A$-$\kappa$-pro-definable sets is 
${\rm Mor}(X,Y)=\lim_j{\rm colim}_i{\rm Mor}(X_i,Y_j)$.
We will denote this category ${\rm ProDef}_{A,\kappa}$.
There is an inclusion functor ${\rm Def}_A\to {\rm ProDef}_{A,\kappa}$
which is fully faithful.

Assume that the model $M$ is $\kappa$-saturated in the language 
$L(A)$.
Then there is a functor $F:{\rm Def}_A\to {\rm Sets}$ that takes
the $A$-definable set $D$, to the set of $M$ points of $D$.
This functor extends in the obvious manner to
a forgetful functor
 $F:{\rm ProDef}_{A,\kappa}\to {\rm Sets}$.
By compactness this functor is faithful.
This functor factors through $F:{\rm ProDef}_{A,\kappa}\to {\rm Top}$ 
by giving $F(X)$ the topology given by basic open sets the
$A$-relatively definable subsets of $F(X)$, in other words
the inverse image of $A$-definable subset of $X_i$ via the canonical
projection $F(X)\to X_i$.
We remark that because the category ${\rm Def}_A$ has nonempty finite limits
and the forgetful functor ${\rm Def}_A\to {\rm Sets}$ commutes with them
then ${\rm ProDef}_{A,\kappa}$ has limits indexed by nonempty categories
of cardinality $\leq \kappa$, and 
${\rm ProDef}_{A,\kappa}\to {\rm Sets}$ commutes with them.
However ${\rm ProDef}_{A,\kappa}\to {\rm Top}$ does not commute with limits.

Recall that a topological space is T$_0$ if two points belonging to the same open sets
are equal. The T$_0$-ification of a topological space is the space obtained by identifying
two points if they belong to the same open sets.

For an $A$-$\kappa$-pro-definable set $X$, $F(X)$ is a compact topological
space and its T$_0$-ification is Hausdorff. The 
forgetful functor ${\rm ProDef}_{A,\kappa}\to {\rm Top}$ and its
composition with the T$_0$-ification functor commute with 
directed limits with index set of cardinality $\leq \kappa$.

Given an $A$-$\kappa$-pro-definable set 
$X$, an $A$-$\kappa$-pro-definable subset $Y$ is a subobject
$Y\to X$ isomorphic to the one
given by compatible $A$-definable subsets $Y_i\subset X_i$.
These subobjects are determined by the image of
the inclusion $F(Y)\to F(X)$ in ${\rm Sets}$.
Indeed if $Z$ is an $A$-$\kappa$-pro-definable object
${\rm Mor}(Z,Y)={\rm Mor}(Z,X)\times_{{\rm Mor}(F(Z),F(X))}{\rm Mor}(F(Z),F(Y))$. 
The inclusion $F(Y)\to F(X)$ is a closed immersion.
Similarly an $A$-relatively definable subset of $X$ is a subobject
$Y\to X$ isomorphic to a subobject of the form $\lim_{j\geq i}\pi_{ji}^{-1}Y_i$ for some index $i$ and 
$A$-definable subset $Y_i\subset X_i$. These subobjects correspond to
open and closed subsets $Y\subset F(X)$. 
So in particular $A$-type-definable subsets of an $A$-definable set
$D$
are $A$-pro-definable subsets of $D$.

\begin{pprop}\label{pro-def-full}
\begin{enumerate}
\item If $X,Y$ are $A$-$\kappa$-pro-definable sets
and $f:F(X)\to F(Y)$ is such that for any $A$-definable set $E$,
$f\times 1:F(X\times E)\to F(Y\times E)$ is continuous, then
there is $g:X\to Y$ such that $F(g)=f$.
\item The forgetful functor $F:{\rm ProDef}_{A,\kappa}\to {\rm Sets}$ reflects isomorphisms, that is, if $f:X\to Y$ is such that
$F(f)$ is a bijection, then $f$ is an isomorphism.
\end{enumerate}
\end{pprop}
The first one follows from
the description of pro-definable morphisms in Proposition 8
of \cite{prodef}, the second one is proven in Proposition 8 of
\cite{prodef}.

Given a theory with two $0$-definable constants, finite sets
can be considered as $0$-definable sets. The profinite
topological space $X$ is considered as a $0$-pro-definable
set via $X=\lim_{U}X/U$ where the limit is over
the finite partitions of $X$ by clopens, $X/U=U$ and the canonical
projection $\pi_U:X\to X/U$ is given by $x\mapsto R$ where
$x\in R$ and $R\in U$.

We note that for a $0$-type-definable set
$L\subset M^n$, the map $L\to S^n(0)$ is $0$-pro-definable.
\begin{plem}\label{idempotent2}
Let $M$ be a sufficiently saturated model of a theory $\mathcal{T}$ which defines two 
different $0$-definable constants.

Suppose $D$ is a $0$-definable set.
Suppose $X$ is a profinite topological space.
Suppose $C_d\subset Z^r, d\in D$, 
is a $0$-definable family of sets.

Assume that there exists an elementary submodel $N$ of $M$ 
 such that for every $d\in D(N)$, $C_d$ is finite.

Suppose given $Y_d\subset X\times C_d$ for $d\in D$, $Y_d$ a nonempty semigroup.
We consider $X$ as a $0$-pro-definable set.
Denote $Y=\cup_d Y_d\times\{d\}$ and 
$C=\cup_d C_d\times\{d\}$
Assume:
\begin{enumerate}
\item $Y$ as a subset of $X\times C$
 is $0$-relatively definable.
\item The given semigroup product $Y\times_DY\to Y$ 
is a $0$-pro-definable
map.
\end{enumerate}
Then $Y_d$ has an idempotent.
\end{plem}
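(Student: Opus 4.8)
The plan is to adapt the Ellis--Nakamura argument to the compact semigroup $F(Y_d)$, using pro-definability to supply the continuity that replaces the topological hypothesis of the classical statement. First I would set $S=F(Y_d)$; by the general fact recorded above (the image under $F$ of a $0$-pro-definable set is compact, and a $0$-relatively definable, hence closed, subobject maps to a compact subspace) the space $S$ is nonempty and compact, and its points are honest pairs $(\xi,c)$ with $\xi\in X$ a point of the profinite space and $c\in C_d(Z)$. The semigroup product, being a $0$-pro-definable map, induces a multiplication on $S$, and I would first check that it is \emph{separately} continuous: fixing $s\in S$, the preimage under $x\mapsto x\cdot s$ of a basic relatively definable open $V$ is $\{x:\ x\cdot s\in V\}$, and since the product is pro-definable this membership factors through a finite stage of $X$ and depends definably on $x$, so the preimage is relatively definable, hence open; the same works for $x\mapsto s\cdot x$. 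Thus $S$ is a compact semigroup whose left and right translations are continuous.

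Next I would run the minimal-subsemigroup argument. Zorn's lemma, together with compactness (a decreasing chain of nonempty closed subsemigroups has nonempty intersection by the finite intersection property, and intersections of subsemigroups are subsemigroups), yields a minimal nonempty closed subsemigroup $T\subset S$. The one place where the classical proof uses Hausdorffness is to assert that $Ts$ is closed; I would instead exploit separate continuity. For a subsemigroup $A$, separate continuity gives $\overline{A}\cdot a\subset\overline{A a}\subset\overline A$ for $a\in A$, and then, for $a\in\overline A$, left continuity gives $a\cdot\overline A\subset\overline{a A}\subset\overline A$, so $\overline A$ is again a subsemigroup. Applying this to $A=Ts$ (which is a subsemigroup contained in $T$ because $(t_1s)(t_2s)=(t_1st_2)s\in Ts$), the set $\overline{Ts}$ is a closed subsemigroup of $T$, hence equals $T$ by minimality; the stabilizer-type set $\{t\in T:\ t\cdot s\in\overline{\{s\}}\}$ is then a closed subsemigroup, again all of $T$ by minimality, which forces an idempotent in the Hausdorff $T_0$-ification of $T$, i.e.\ an idempotent \emph{type}. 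This is exactly the Ellis--Nakamura conclusion of \cite{idempotent}, transported to the pro-definable setting of \cite{prodef}.

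It then remains to descend from an idempotent type to an idempotent \emph{element} of $Y_d$, and here the two halves of the hypothesis cooperate. In the profinite direction there is no collapse: the $T_0$-ification of $F(X)$ is $F(X)$ itself, so the $X$-coordinate $\xi$ of the idempotent type is already a genuine point of $X$. Fixing this $\xi$, the remaining requirement is to find $c$ in the bounded definable fibre $\{c\in C_d:\ (\xi,c)\in Y_d\}$ with $(\xi,c)\cdot(\xi,c)=(\xi,c)$; this is the purely bounded situation, where ``having an idempotent'' is first-order expressible and transfers from the finite semigroups over $(\mathbb Z,+,<)$ to the saturated $Z$, as in the motivating remark preceding the lemma. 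I would package the descent by observing that the full idempotency condition $m(e,e)=e$ is type-definable over the parameters (it is the equalizer of two pro-definable maps, closed on each finite stage of $X$ together with the single equation on the $C$-coordinate), so by $\omega$-saturation it is realized in $Y_d$ as soon as it is finitely satisfiable; the idempotent type produced above, restricted to each finite stage, provides exactly these finite approximations.

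The main obstacle is precisely the failure of Hausdorffness of $F(Y_d)$ coming from the bounded component $F(C_d)$, where distinct elements sharing a type are topologically indistinguishable. This blocks both the literal Ellis--Nakamura step (``$Ts$ is closed'') and any naive attempt to define the product directly on the type space, since the type of a product depends on the \emph{joint} type of the factors, not on the two marginals. My resolution is twofold: separate continuity of the product, established from pro-definability, makes closures of subsemigroups subsemigroups and lets the minimal-subsemigroup argument go through on the Hausdorff $T_0$-ification; and the profinite/bounded split lets me convert the resulting idempotent type into an honest idempotent element, the profinite coordinate surviving verbatim and the bounded coordinate being supplied by first-order expressibility and saturation. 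Verifying that the product really is separately continuous, and that the equalizer defining idempotency is genuinely type-definable over the given parameters, are the technical points I expect to require the most care.
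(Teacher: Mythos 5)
There is a genuine gap, and it sits exactly where you predicted the difficulty would be: the separate continuity claim. The topology that makes $S=F(Y_d)$ compact has as basic opens only the $d$-relatively definable subsets (relative definability over a \emph{fixed small} parameter set is what lets saturation give compactness). But translation by a fixed $s\in S$ is pro-definable only over the additional parameters of $s$: its $C$-coordinate $c_s$ is an arbitrary element of the bounded set $C_d$, typically not in ${\rm dcl}(d)$, so the preimage of a basic open under $x\mapsto x\cdot s$ is $dc_s$-relatively definable, which is \emph{not} open in the $d$-topology. If you instead enlarge the topology to admit arbitrary parameters, every singleton of $C_d(Z)$ becomes open and the (infinite, in a saturated model) bounded fibre is no longer compact. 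So there is no single topology in which $S$ is compact and all translations are continuous; ``separate continuity from pro-definability'' conflates parameter sets. Even granting it, your endgame is inconsistent with your own correct observation that the product depends on joint types rather than marginals: the multiplication does not descend to the T$_0$-ification, so ``an idempotent in the Hausdorff T$_0$-ification of $T$'' is not a well-defined notion, and the step extracting $t$ with $ts\in\overline{\{s\}}$ from $\overline{Ts}=T$ is not licensed by separate continuity (this is the delicate point even in the Hausdorff right-topological theory, where one needs $Ts$ closed, not just $\overline{Ts}=T$).

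The descent step has a matching defect. A $U$-idempotent requires the $C$-coordinate of the square to equal $c$ \emph{exactly} (only the $X$-coordinate is truncated to $X/U$), whereas an ``idempotent type'' can only give $c(e^2)\equiv_d c(e)$, not $c(e^2)=c(e)$; so it does not supply the finite approximations needed to make the type-definable condition $m(e,e)=e$ finitely satisfiable, and saturation has nothing to realize. The paper's proof avoids both problems by never applying Ellis--Nakamura to the saturated fibre at all: it shows the set $E_U$ of $d\in D$ admitting a $U$-idempotent is $0$-definable (using that the pro-definable product factors through finite stages $F_V\times_D F_V\to F_U$), verifies $D\cap\mathbb{Z}\subset E_U$ by the classical lemma applied to the genuinely compact Hausdorff semigroups $Y_d$ for standard $d$ (where $C_d$ is finite, so $X\times C_d$ is profinite), concludes $E_U=D$ by elementarity of $\mathbb{Z}\preceq Z$, and only then runs the compactness step you end with, taking an accumulation point of the net $(x_U,c_U)$ of honest $U$-idempotents. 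Your final saturation argument parallels that last paragraph of the paper, but its required input --- a $U$-idempotent in $Y_d$ for every $U$ at the given nonstandard $d$ --- is precisely what your direct Ellis--Nakamura argument on $F(Y_d)$ fails to produce; some transfer-from-the-standard-model (or similar definability) idea is indispensable here.
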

We use the previous lemma in the case where $M$ is an $\omega$-saturated model
of Presburger arithmetic, $N=\mathbb{Z}$ and $C_d$ are bounded sets.
\begin{proof}
Denote $\pi_U:X\times C\to X/U\times C$ the canonical projection.

Hypothesis 1 says that there is 
$U_0$ and $F_0\subset X/U_0\times C$ $0$-definable such that
 $Y=\pi_{U_0}^{-1}(F_0)$.

Denote $F_U=\pi_{U,U_0}^{-1}F_0$ for 
$U$ refining $U_0$ and $\pi_{U,U_0}:X/U\times C\to X/U_0\times C$
the canonical projection.

Denote $E_U\subset D$ defined by $d\in E_U$ if and only if $Y_d$ has a
$U$-idempotent, that is, there is a $(x,c)\in Y_d$ such that
$\pi_U(x,c)^2=\pi_U(x,c)$.

Hypothesis 2 says that for every $U$ refining $U_0$ there is $V$ refining $U$
such that $Y\times_D Y\to Y\to F_U$ factors as
$Y\times_D Y\to F_V\times_D F_V\to F_U$. Denote
$m_{V,U}:F_V\times_D F_V\to F_U$.
Then $d\in E_U$ if and only if there exists $x$ such that $(x,d)\in F_V$
and $m_{V,U}((x,d),(x,d))=\pi_{V,U}(x,d)$. We conclude that
$E_U$ is $0$-definable.

Take now $d\in D\cap N^s$, where $D\subset M^s$. Then $C_d$ is finite, so
$Y_d$ is a non-empty compact Hausdorff topological semigroup,
so by the Ellis-Nakamura Lemma $d\in E_U$.
We have seen $D\cap N^s=E_U\cap N^s$.
But $E_U$ is $0$-definable, so $D=E_U$. 
We conclude that if $d\in D$ then $Y_d$ has a $U$-idempotent, say
$(x_U,c_U)$.

Take $(x,c)\in Y_d$ an accumulation point of the net
$(x_U,c_U)$ in $Y_d$ with respect to the $d$-pro-definable
topology. If $Z_{d,V}$ is the set of $V$-idempotents of $Y_d$
then $Z_{d,V}$ is the pullback of $\Delta:F_{V,d}\to F_{V,d}^2$
(the diagonal) along $Y_d\to Y_d^2\to F_{V,d}^2$, where the first 
arrow is $x\mapsto (x^2,x)$. As $\Delta$ is an injective map
of $d$-definable sets we see that $Z_{d,V}$ is $d$-relatively
definable. So for $V$ fixed, because $(x_U,c_U)$ is in $Z_{d,V}$
for eventual $U$, we conclude $(x,c)\in Z_{d,V}$. That is, $(x,c)$ is 
$V$-idempotent for every $V$. This implies that $(x,c)$ is an
idempotent.
\end{proof}
Note that for a $0$-pro-definable family (as in 1) and 2)) the set
of $d$ such that $Y_d$ is a semigroup is $0$-type-definable.
That it contains a $0$-definable set around the point of interest
is part of the hypothesis.

We use the next Lemma only for $M,\Sigma$ and $C_d$ as 
in the situation \ref{types-at-infinity}.
In that situation $M$ is a $Z$-group, 
$C_d$ is bounded and $b\vDash \Sigma_a$ informally means ``$b$ is infinite
relative to $a$", see also condition 2) of Lemma \ref{definability}.
Condition 1) is something like orthogonality.
\begin{plem}\label{definability}
Let $M$ be a model of any theory that defines
two different $0$-definable constants, which is sufficiently saturated.

Let $\Sigma(x,y)$ be a partial type with parameters in $0$, and
variables in $M^n\times M$.

Denote $\Sigma_b(x)=\cup_i\Sigma(x,b_i)$ for $b=(b_1,\dots,b_m)$.

Take $D$ a $0$-definable set and 
$\{C_d\}_{d\in D}$ a $0$-definable family.

Suppose given a $0$-definable family $\{S_d\}_{d\in D}$ of 
left cancellative semigroups, $S_d\subset M^n\times C_d$.

Define $X\subset S^n(0)$ the set of types $p$ such that for every
$t\in M$ there is $a\vDash p\cup \Sigma_t$. Then $X$ is a profinite
topological space.

Assume that:
\begin{enumerate}
\item If $a\vDash \Sigma_t$ and $a'\vDash \Sigma_{t'}$, and
$a\equiv_0 a', t\equiv_0 t'$, then $at\equiv_0 a't'$.
\item If $a\vDash \Sigma_{dt}$ with $d\in D$, and $c\in C_d$, then
$a\vDash \Sigma_{dtc}$.
\item If $f:L\to M^n$ is an injective 
$a$-definable function and $p\in X$,
$b\vDash p\cup\Sigma_a$, $b\in L$, then $f(b)\vDash \Sigma_a$.
\end{enumerate}

Denote $Y_d\subset S^n(0)\times C_d$ to be the set of pairs
$(p,c)$ such that $p\in X$ and there is $a\vDash p\cup \Sigma_d$ 
such that
$(a,c)\in S_d$.

Define $Y\times_D Y\to Y$ by
$(p_1,c_1,d)\cdot(p_2,c_2,d)=(p_3,c_3,d)$ if
there is $a_1\vDash p_1\cup\Sigma_d$ and 
$a_2\vDash p_2\cup\Sigma_{da_1}$ such that
$(a_1,c_1)(a_2,c_2)=(a_3,c_3)$ and
$a_3\vDash p_3\cup\Sigma_{d}$.

Then $Y$ is $0$-relatively definable in $X\times C$
 and $Y\times_D Y\to Y$ is a well
defined $0$-pro-definable map, that makes $Y_d$ a semigroup.

We also have that if 
$a_1\vDash p_1\cup\Sigma_d$ and 
$a_2\vDash p_2\cup\Sigma_{da_1}$,
$(a_1,c_1)(a_2,c_2)=(a_3,c_3)$ and
$a_3\vDash p_3\cup\Sigma_{d}$, then $a_3\vDash \Sigma_{da_1}$.

If in addition $d_0\in D$ is such that there exists
$h:T\to S_{d_0}$ an injective $d_0t$-definable map and there exists
$p_0\in X$ such that $p_0\cup \Sigma_{d_0t}\vdash T$, then
there exists $d_0\in D_0\subset D$ $0$-definable such that
$Y_d$ is non-empty for all $d\in D_0$.
\end{plem}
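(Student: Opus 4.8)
The plan is to verify the assertions in the order stated—profiniteness of $X$, relative definability of $Y$ together with the well-definedness and pro-definability of the product, and finally the nonemptiness statement—reading hypotheses (1)--(3) as rigidity statements and using the left cancellativity of $S_d$. First I would show $X$ is profinite by realising it as a closed subspace of the Stone space $S^n(0)$. For a fixed parameter $t$ the set $\{p\in S^n(0):p\cup\Sigma_t\text{ is consistent}\}$ is the image of the closed set $[\Sigma_t]\subset S^n(t)$ under the continuous restriction map $S^n(t)\to S^n(0)$, hence closed; as $X$ is the intersection of these sets over all $t\in M$, it is closed in $S^n(0)$, and a closed subspace of a Stone space is compact, Hausdorff and totally disconnected, i.e. profinite. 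We endow it with the pro-definable structure $X=\lim_U X/U$ over its finite clopen partitions.

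For the relative definability of $Y$, I would use that relatively definable subsets are exactly the clopen ones, so it suffices to check that $Y$ and its complement in $X\times C$ are both closed. Closedness of $Y$ is immediate: for fixed $(c,d)$ the condition $(p,c)\in Y_d$ says that the partial type $p\cup\Sigma_d\cup\{(x,c)\in S_d\}$ is consistent, which, being an intersection over finite subtypes of images of closed sets under restriction maps, is closed in $p$ uniformly in $(c,d)$; by compactness its negation is witnessed by a single formula of $p$ and finitely much of $\Sigma_d$, so the complement is open. The delicate half is that the complement is also closed, equivalently that $(p,c)\in Y_d$ is an open condition; here I would use left cancellativity of $S_d$ together with hypothesis (3) to show that membership depends on $p$ only through its class at a fixed finite level $U_0$, producing a $0$-definable $F_0$ with $Y=\pi_{U_0}^{-1}(F_0)$. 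I expect this to be the main obstacle, as it is precisely the step that upgrades the evident type-definability of $Y$ to the relative definability demanded as a hypothesis of Lemma \ref{idempotent2}.

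The well-definedness of the product is then a rigidity computation. Given two admissible choices $(a_1,a_2)$ and $(a_1',a_2')$ for a fixed input $(p_1,c_1)$, $(p_2,c_2)$, $d$, hypothesis (2) lets me absorb $c_1,c_2\in C_d$ into the parameter tuple, so that $a_1,a_1'\vDash\Sigma_{dc_1c_2}$, $a_2\vDash\Sigma_{da_1c_1c_2}$ and $a_2'\vDash\Sigma_{da_1'c_1c_2}$, and two applications of hypothesis (1) give first $(a_1,d,c_1,c_2)\equiv_0(a_1',d,c_1,c_2)$ and then, using $a_2\vDash\Sigma_{da_1}$, the equivalence $(a_2,a_1,d,c_1,c_2)\equiv_0(a_2',a_1',d,c_1,c_2)$; hence $(a_1,a_2)\equiv_{dc_1c_2}(a_1',a_2')$. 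Applying the $d$-definable semigroup operation to this equivalence shows that the output type $p_3=\mathrm{tp}(a_3/0)$, and with it the bounded coordinate $c_3$, are independent of the choices, so the product is a well-defined function; associativity is inherited coordinatewise from $S_d$, and pro-definability follows by exhibiting the operation on $M$-points and invoking the criterion that a suitably continuous map on points comes from a morphism of pro-definable sets.

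Finally, for the nonemptiness statement I would first produce a point of $Y_{d_0}$. Since $p_0\in X$, the type $p_0\cup\Sigma_{d_0t}$ is consistent; choosing $a\vDash p_0\cup\Sigma_{d_0t}$ gives $a\in T$ by hypothesis, so $h(a)=(b,c)\in S_{d_0}$. Applying hypothesis (3) to $h$ through the coordinate projection making it injective into $M^n$ yields $b\vDash\Sigma_{d_0t}$, hence $b\vDash\Sigma_{d_0}$, and transporting the largeness of $p_0$ along the injective definable map shows $q:=\mathrm{tp}(b/0)\in X$, so that $(q,c)\in Y_{d_0}$. To get a $0$-definable neighbourhood, I would observe that this construction is uniform: replacing $p_0$ and $T$ by finite approximations turns the hypothesis into a first-order condition on $d$, with $t$ and the finite data existentially quantified, which by saturation still suffices to realise an element of $Y_d$. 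The set of $d$ admitting such a finite witness is $0$-$\lor$-definable and contains $d_0$, so one of its $0$-definable pieces serves as the required $D_0$.
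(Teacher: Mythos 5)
Your outline gets the easy parts right (closedness of $X$ in $S^n(0)$, the type-definability of $Y$, and the compactness/$\lor$-definability construction of $D_0$), but it has genuine gaps at exactly the points where the hypotheses of left cancellativity and (3) must do work, and it misassigns those hypotheses. The central conclusion that $Y$ is $0$-relatively definable in $X\times C$ is the step you yourself flag as ``the main obstacle'' and then leave unproven, and the tool you propose for it is the wrong one: the paper proves it from hypotheses (1) and (2) alone, with no cancellativity. Concretely, let $R\subset M^n\times C$ be the $0$-type-definable set of $(a,c,d)$ with $tp(a/0)\in X$ and $a\vDash\Sigma_d$; the $0$-pro-definable surjection $R\to X\times C$, $(a,c,d)\mapsto(tp(a/0),c,d)$, has bijective T$_0$-ification because hypothesis (1) makes $p\cup\Sigma_t$ a complete type over $t$ whenever consistent, hence it is a homeomorphism after T$_0$-ification; hypothesis (2) (absorbing $c\in C_d$ into the parameters) shows the clopen set $\Theta_1=\{(a,c,d)\in R\mid (a,c)\in S_d\}$ is a union of fibers, so its image $Y$ is clopen, i.e.\ relatively definable. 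Where left cancellativity and hypothesis (3) are genuinely needed --- and where your proposal is silent --- is closure: one must show the product lands in $Y$, i.e.\ that $a_3\vDash\Sigma_{dta_1}$ and $tp(a_3/0)\in X$. Writing $a_3=P(a_1,a_2,c_1,c_2,d)$ and $c_3=Q(a_1,a_2,c_1,c_2,d)$ for the $0$-definable coordinate functions of the semigroup operation, the map $x\mapsto P(a_1,x,c_1,c_2,d)$ is injective on the definable set $L=\{x\mid Q(a_1,x,c_1,c_2,d)=c_3,\ (x,c_2)\in S_d\}$ by left cancellativity, and hypothesis (3) applied to this $f:L\to M^n$ gives $a_3\vDash\Sigma_{dta_1c_1c_2c_3}$. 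Associativity is likewise not ``inherited coordinatewise'': it requires the factorizations $m(\Theta_2)\subset\Theta_1$ and $1\times m(\Theta_3),\,m\times 1(\Theta_3)\subset\Theta_2$, the last of which needs the further observation that $tp(a/0)\in X$ and $a\vDash\Sigma_b$ imply $a\vDash\Sigma_{bc}$ for $c\in\mathrm{dcl}(b)$.

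Your rigidity computation also falls short of well-definedness of the product \emph{as a function}. From $(a_1,a_2)\equiv_{dc_1c_2}(a_1',a_2')$ you conclude that the output type $p_3$ ``and with it the bounded coordinate $c_3$'' are independent of choices; but type-equality over $dc_1c_2$ only pins down $tp(c_3/dc_1c_2)$, not the element $c_3\in C_d$, and the product must output an actual element. The missing trick, used repeatedly in the paper, is to put $c_3$ \emph{itself} into the parameter tuple: since $c_3\in C_d$, hypothesis (2) gives $a_2\vDash\Sigma_{dta_1c_1c_2c_3}$, and reapplying (1) yields $a_2'\equiv_{dta_1c_1c_2c_3}a_2$, so the formula $Q(a_1,x,c_1,c_2,d)=c_3$ transfers and forces equality $Q(a_1,a_2',c_1,c_2,d)=c_3$ on the nose. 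The same gap recurs in your nonemptiness argument: $h$ is injective into $M^n\times C_{d_0}$, and a ``coordinate projection making it injective into $M^n$'' does not exist in general; one must first fix the $C$-coordinate $c$ (again via (1) and (2), which show all realizations of the relevant type have the same $C$-coordinate $c$) and then apply (3) to $x\mapsto P(x,d,t)$ restricted to the fiber $\{x\in T\mid Q(x,d,t)=c\}$, where injectivity does hold. As written, the proposal leaves the relative definability of $Y$ unproven, the product not well defined in its $C$-coordinate, and closure of $Y$ under the product unaddressed.
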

\begin{proof}
We start with the observation that $X\subset S^n(0)$ is closed.
Indeed $X$ is the image of a closed subset of $S^n(M)$ under the
scalar restriction map $S^n(M)\to S^n(0)$. As these are 
compact Hausdorff spaces, $X$ is closed.

Now we note that condition 1) implies that for every 
$p\in S^n(0)$ and $t$, if $p\cup \Sigma_t$ is consistent, then it is
a complete type over $t$. 

Denote $R\subset M^n\times C$ the set of tuples $(a,c,d)$ such that
$tp(a/0)\in X$ and $a\vDash \Sigma_d$. This is a $0$-type-definable
set. Take now the map $R\to X\times C$ given by $(a,c,d)\mapsto
(tp(a/0),c,d)$. This is a $0$-pro-definable surjective map
$R\to X\times C$. 
\begin{pclaim}\label{bijective-on-t0}
This map $R\to X\times C$ is bijective in T$_0$-ifications, and so it is
an homemorphism in T$_0$-ifications.
\end{pclaim}
\begin{proof}
This is a consequence of conditions 1) and 2).

Indeed if $(a,c,d)$ and $(a',c',d')$ map to the same element in the 
T$_0$-ification of $X\times C$, this means 
$tp(a/0)=tp(a'/0)$ and $cd\equiv_0 c'd'$. From the definition of $R$ we have
$a\vDash \Sigma_d$ and $c\in C_d$; and similarly
 $a'\vDash \Sigma_{d'}$ and $c'\in C_{d'}$.
From condition 2) we have $a\vDash \Sigma_{cd}$ and $a'\vDash \Sigma_{c'd'}$.
From condition 1) we have $acd\equiv_0 a'c'd'$. In other words 
$(a,c,d)$ and $(a',c',d')$ map to the same element in T$_0$-ification of $R$.
\end{proof}
We conclude that
the image of the open and closed set $\Theta_1\subset R$ given by
$(a,c,d)\in\Theta_1$ if and only if $(a,c,d)\in R$ and $(a,c)\in S_d$ is $Y$
which is then open
and closed in $X\times C$, in other words $Y$ is $0$-relatively 
definable in $X\times C$.

Now define $\Theta_2\subset S\times_D S$ and 
$\Theta_3\subset S\times_DS\times_DS$ by 
$(a_1,c_1,d,a_2,c_2,d)\in \Theta_2$ if and only if
$a_1\vDash \Sigma_d$, $tp(a_1/0)\in X$, $a_2\vDash \Sigma_{a_1d}$ and $tp(a_2/0)\in X$.
Similarly $(a_1,c_1,d,a_2,c_2,d,a_3,c_3,d)\in \Theta_3$
if and only if 
\[tp(a_i/0)\in X, a_1\vDash \Sigma_d,a_2\vDash\Sigma_{da_1},a_3\vDash\Sigma_{da_1a_2}.\]

Then there are $0$-prodefinable surjections 
\begin{center}$\Theta_2\to Y\times_D Y$
and $\Theta_3\to Y\times_DY\times_DY$\end{center} given by 
\begin{center}
$(a_1,c_1,d,a_2,c_2,d)\mapsto (tp(a_1/0),c_1,d,tp(a_2/0),c_2,d)$
and $(a_1,c_1,d,a_2,c_2,d,a_3,c_3,d)\mapsto (tp(a_1/0),c_1,d,tp(a_2/0),c_2,d,tp(a_3/0),c_3,d)$.
\end{center}
These maps become bijective on T$_0$-ifications with an argument similar to
claim \ref{bijective-on-t0}.

Now if $m:S\times_D S\to S$ is the multiplication, then we will show
that $m(\Theta_2)\subset \Theta_1$, $1\times m(\Theta_3),m\times 1(\Theta_3)\subset \Theta_2$, and the maps $m$ factors through 
$\Theta_2\to Y\times_DY, \Theta_1\to Y$.
This would show that the product $Y\times_DY\to Y$ defined in the statement
is well-defined and associative. The map $\cdot$ is then continuous. In fact
$(\cdot)\times 1_E$ are also continuous for any $E$ $0$-definable
(as this has the effect of replacing $D$ by $D\times E$). So the product
$\cdot$ is a map
of $0$-pro-definable sets, see Proposition \ref{pro-def-full} 1).

\begin{pclaim}\label{well-defined-theta-1}
Let $a_1\vDash \Sigma_{dt},a_2\vDash \Sigma_{dta_1},
(a_1,c_1)\in S_d$, $(a_2,c_2)\in S_d$ and $(a_1,c_1)(a_2,c_2)=(a_3,c_3)$,
then $a_3\vDash \Sigma_{dta_1}$. 
\end{pclaim}
\begin{proof}
Indeed $a_3=Y(a_1,a_2,c_1,c_2,d)$
and $c_3=Z(a_1,a_2,c_1,c_2,d)$
for $0$-definable maps $Y$, $Z$. By hypothesis 2) we have 
$a_2\vDash \Sigma_{dta_1c_1c_2c_3}$. Now if
$a_2'\equiv_{dta_1c_1c_2} a_2$, then 
$a_2'\vDash \Sigma_{dta_1c_1c_2}$ and so by hypothesis 2) we get
$a_2'\vDash\Sigma_{dta_1c_1c_2c_3}$ and 
$a_2'\equiv_{dta_1c_1c_2c_3}a_2$. From this it follows that
$Z(a_1,a_2',c_1,c_2,d)=Z(a_1,a_2,c_1,c_2,d)=c_3$, and so, because 
$S_d$ is left cancellative $Y(a_1,a_2',c_1,c_2,d)\neq Y(a_1,a_2,c_1,c_2,d)$
if $a_2\neq a_2'$.
Now by hypothesis 3) we conclude $a_3\vDash \Sigma_{dta_1}$.
\end{proof}
This proves the claim after the definition of $Y$ in the statement of 
this lemma.
\begin{pclaim}\label{well-defined-theta-2}
Let $a_1,a_2,a_3,c_1,c_2,c_3$ be as in the previous claim.
If we take $a_1', a_2'$ such that
$a_1'a_2'\equiv_{c_1c_2dt}a_1a_2$ then $a_1'a_2'\equiv_{dta_1c_1c_2c_3} a_1a_2$
and $Z(a_1',a_2',c_1,c_2,d)=Z(a_1,a_2,c_1,c_2,d)=c_3$.
\end{pclaim}
The proof is similar to the previous claim.

These two claims show that 
$m(\Theta_2)\subset \Theta_1$ and that it factors as a product
$Y\times_D Y\to Y$. We also have $1\times m(\Theta_3)\subset \Theta_2$.

We are left with proving that $m\times 1(\Theta_3)\subset \Theta_2$.

\begin{pclaim}\label{well-defined-theta-3}
 $tp(a/0)\in X$ and $a\vDash \Sigma_b$ then
$a\vDash \Sigma_{bc}$ for $c\in{\rm dcl}(b)$. 
\end{pclaim}
\begin{proof}
Indeed if $a'$ is such that
$a'\equiv_0 a$ and $a'\vDash \Sigma_{bc}$, then $a'b\equiv_0 ab$ and as 
$c\in {\rm dcl}(b)$ we get $a'bc\equiv_0 abc$ so $a\vDash \Sigma_{bc}$.
\end{proof}
Now let $(a_1,c_1,d,a_2,c_2,d,a_3,c_3,d)\in\Theta_3$ and 
$(a_1,c_1)(a_2,c_2)=(a_4,c_4)$. 
Note that $a_4=Y(a_1,a_2,c_1,c_2,d)$ for a $0$-definable
function $Y$. We have seen in claim \ref{well-defined-theta-1} 
that $a_4\vDash \Sigma_{d}$. 
Now as $a_3\vDash\Sigma_{a_1a_2d}$ we get
$a_3\vDash \Sigma_{a_1a_2c_1c_2d}$ and so, by claim 
\ref{well-defined-theta-3}, $a_3\vDash \Sigma_{a_4d}$. 
This shows that $m\times 1(\Theta_3)\subset \Theta_2$ as required.

Now for the last statement, we consider
\[D_1=\{d\in D\mid \text{There exists }t, p_0\cup\Sigma_{dt}\vdash T_{dt},
\text{ and }, h_{dt}:T_{dt}\to S_d \text{ is injective }\}.\]
By compactness this is a 
$0$-$\lor$-definable set so we take $d_0\in D_0\subset D_1$
$0$-definable.
Now take $d\in D_0$ and $t$ be such that  $p_0\cup\Sigma_{dt}\vdash T_{dt},$
and $h_{dt}:T_{dt}\to S_d$  is injective. Take $s$ some extra parameters. 
If $a\vDash p_0\cup\Sigma_{dts}$,
then $h_{dt}(a)=(a'',c)$ with $a''=Y(a,d,t), c=Z(a,d,t)$ , $Y$, $Z$, 
$0$-definable functions. 
As in the proof of claim \ref{well-defined-theta-1} we have that if
$a'\vDash p_0\cup\Sigma_{dts}$ and $a'\neq a$, then 
$Z(a,d,t)=Z(a',d,t)$, $Y(a,d,t)\neq Y(a',d,t)$ and so
$Y(a,d,t)\vDash \Sigma_{dts}$. So
for $p=tp(a''/0)$ we get $(p,c)\in Y_d$.
This ends the proof of the lemma.
\end{proof}
\begin{pnot}\label{types-at-infinity}
Let $(Z,+,<)$ be an $\omega$-saturated $Z$-group.

Define $\Sigma(x,y)$ to be the partial type over $0$ with variables in
$Z^r\times Z$, given by
$(a,b)\vDash \Sigma$ if and only if for every nonzero tuple $m\in\mathbb{Z}^r$
$|b|,1\ll |ma|$.
For $b\in Z^k$ denote $\Sigma_b(x)=\bigcup_i\Sigma(x,b_i)$ where $b=(b_1,\dots,b_k)$.
Denote $X\subset S^r(0)$ the set of types $p$ such that, $a\vDash p$ implies
$1\ll |ma|$ for every nonzero tuple $m\in \mathbb{Z}^r$.
Here $ma$ denotes $\sum_i m_ia_i$.
\end{pnot}
\begin{plem}\label{type-lema-z}
With the notation of \ref{types-at-infinity}.
If $b\vDash \Sigma_{ac}$ and $c$ is divisible then $b\equiv_a b+c$.
\end{plem}
\begin{proof}
By elimination of quantifiers we have to see that $nb+ma+r>0$ if and only if 
$n(b+c)+ma+r>0$ and that $nb+ka+r\equiv n(b+c)+ma+r\mod N$ for tuples $n,m,r$ with integers
coefficients and $N$ an integer.
The second condition follows because $c$ is divisible.
The first condition is trivial if $n=0$. If $n\neq 0$, by definition of $\Sigma$ we get
that $nb$ is the dominant term in both expressions, so they are equivalent to $nb>0$.
\end{proof}
\begin{plem}\label{disintegration}
Let $Z$ be a $\omega$-saturated $Z$-group.
Let $\{C_d\}_{d\in D}$ be a $0$-definable family of bounded
sets.
Define $\Sigma(x,y)$ as in \ref{types-at-infinity}.
Then $\Sigma$ and $C_d$ satisfy hypothesis 1,2 and 3 of Lemma
\ref{definability}
\end{plem}
\begin{proof}
Let $a,b,a',b'$ be such that $a\vDash \Sigma_b$, $a'\vDash \Sigma_{b'}$
and $a\equiv_0 a', b\equiv_0 b'$. By elimination of quantifiers we have
to see that $na+mb+k>0$ if and only if $na'+mb'+k>0$ and 
$na+mb+k\equiv na'+mb'+k\mod N$. If $n\neq 0$, then 
$na+mb+k>0$ if and only if $na>0$. If $n=0$, this becomes a condition on $b$.
The divisibility type of $na+mb+k$ depends only on the divisibility
type of $a$ and $b$. This proves Hypothesis 1.

If $f(d)={\rm Max}\{|x|\mid x\in C_d\}$, then $f$ is $0$-definable
and so there is a cell decomposition of $D$ such that on each of the
cells $f$ is of the form $Ad+B$. So $|f(d)|\leq N(|d|+1)$
for an integer $N$. This shows Hypothesis 2.

Let $f,a,b,p$ as in the statement of Hypothesis 3.
Then there exists $A,B,C$ with rational entries such that
$f(b)=Ab+Ba+C$. 

We claim that $A$ is invertible
Otherwise if $A$ is singular then we first find a non-zero divisible 
tuple $c$ such that $b\vDash \Sigma_c$ and $Ac=0$. This exists because for any $N$
we may find $d$ non-zero with integer coefficients, and so satisfying $b\vDash \Sigma_d$,
 with $Ad=0$ and $c\equiv 0 \mod N$, so we conclude by compactness.
Note however that $b+c\equiv_a b$ by Lemma \ref{type-lema-z}
So now we conclude that
$f(b)=f(b+c)$.
This contradicts the injectivity of $f$.

So $A$ is invertible. If $m\neq 0$, then 
$|mf(b)|\geq |mAb|-|mBa|-|C|\gg |a|,1$ so 
$f(b)\vDash \Sigma_a$.
\end{proof}
For convenience we 
specialize the previous lemmas to the situation at hand in the 
lemma that follows
\begin{plem}\label{idempotent3}
Let $(Z,+,<)$ be an $\omega$-saturated $Z$-group.

Let $T$ be a $0$-definable set.
Let $\{C_t\}_{t\in T}$ be a $0$-definable family of bounded sets.
Let $\{G_t\}_{t\in T}$ be a $0$-definable family of groups, such that
$G_t\subset Z^r\times C_t$. 

Assume ${\rm ubd}(G_t)=r$ for every $t$.

With notation as in \ref{types-at-infinity}, 
for every $t$ there exists $p\in X$ and $b\in C_t$
such that if
$x\vDash p\cup \Sigma_t$ and $y\vDash p\cup\Sigma_{(t,x)}$,
then $(x,b)$ and $(y,b)$ belong to $G_t$ and 
$(x,b)(y,b)=(z,b)$ satisfies $z\vDash p\cup \Sigma_{(t,x)}$.
\end{plem}
\begin{proof}
We take $Y$ as in Lemma \ref{definability}, with $G$ taking the place of $S$.
The hypothesis of that Lemma is satisfied by Lemma \ref{disintegration}.
We conclude by Lemma \ref{idempotent2}.
\end{proof}
\section{Groups definable in Presburger arithmetic}
\begin{plem}\label{eventual-injectivity-implies-injectivity}
Let $\Sigma$ and $X$ be as in \ref{types-at-infinity}.
If $f:(\cap_n nZ)^r\to G$ is a type-definable group morphism such that
$f$ is injective in 
$p\cup \Sigma_t$ for some
$t\in Z$ and $p\in X$ that implies $(\cap_n nZ)^r$, then $f$ is injective.
\end{plem}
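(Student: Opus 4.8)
The plan is to reduce injectivity of $f$ on all of $(\cap_n nZ)^r$ to the injectivity hypothesis on the type $p\cup\Sigma_t$, exploiting that the kernel of a group morphism is a subgroup and that $\Sigma$ detects ``generic'' behaviour. First I would argue by contraposition: suppose $f$ is not injective, so there is a nonzero divisible tuple $a\in(\cap_n nZ)^r$ with $f(a)=0$ (here I use that $f$ is a group morphism, so its kernel is a subgroup, and any failure of injectivity produces a nonzero kernel element). The goal is then to transport this kernel element into the region governed by $p\cup\Sigma_t$, where injectivity is assumed, to derive a contradiction.

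The key mechanism is that $\Sigma$ (as defined in Lemma \ref{disintegration}) only constrains the tuple to be ``spread out'' relative to the parameter: $(b,a)\vDash\Sigma$ asks that every nonzero integer combination $m_1b_1+\dots+m_rb_r$ dominate $|a|$ and $1$. Given a nonzero kernel element $v$, I would produce a realization $b\vDash p\cup\Sigma_t$ together with a second realization $b+Nv$ (for a suitable $N\in\mathbb{Z}_{>0}$ making the translate land in $(\cap_n nZ)^r$ and still divisible) that \emph{also} realizes $p\cup\Sigma_t$. Since $Nv$ is itself a kernel element (the kernel is a subgroup of the divisible group), we have $f(b)=f(b+Nv)$ with $b\neq b+Nv$, contradicting the assumed injectivity of $f$ on $p\cup\Sigma_t$.

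The main obstacle, and the step deserving the most care, is showing that both $b$ and its translate $b+Nv$ satisfy the \emph{same} type $p$ and remain inside the $\Sigma_t$-region simultaneously. For $\Sigma_t$ this is essentially automatic: adding a fixed divisible $v$ with $|v|$ bounded cannot destroy the domination conditions $1,|t|\ll|m\cdot b|$, since for $c\ll b$ divisible these quantities change only by a bounded amount relative to $b$ (this is the same mechanism used in Lemma \ref{disintegration}, where $f(b)=f(b+c)$ was exploited). For the type $p\in X$, I would use that $p$ is a complete type implying $(\cap_n nZ)^r$ together with the saturation of $Z$ and the fact that $b$ is $\Sigma_t$-generic: by quantifier elimination in the Presburger language, the type of $b$ over $t$ is determined by the signs of affine forms and by congruence conditions, and adding a bounded divisible $v$ preserves all congruences (divisibility type is unchanged) while the generic size of $b$ makes all relevant sign conditions insensitive to the bounded perturbation. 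Hence $b+Nv\vDash p\cup\Sigma_t$ as well, completing the contradiction and yielding global injectivity.
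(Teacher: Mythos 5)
Your overall strategy is the same as the paper's: pass to a nonzero kernel element $v$ (the kernel being a subgroup), show that some $b$ and its translate $b+v$ both realize $p\cup\Sigma_t$, and contradict the assumed injectivity on that type. However, the key step as you wrote it has a genuine gap in the order of quantifiers. You first produce $b\vDash p\cup\Sigma_t$ and then assert that adding ``a fixed divisible $v$ with $|v|$ bounded'' cannot destroy the domination conditions. Nothing in the hypotheses makes $v$ bounded, nor gives $v\ll b$ for your particular $b$: the kernel of a type-definable morphism on $(\cap_n nZ)^r$ may contain arbitrarily large elements, and realizations of $p\cup\Sigma_t$ come in all sizes above the $t$-scale. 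If $|m\cdot b|\ll |v|$ for some nonzero $m\in\mathbb{Z}^r$, the translate $b+v$ can flip sign conditions of $p$ (e.g.\ if $p\vdash m\cdot x>0$ and $m\cdot v$ is a huge negative element, then $m\cdot(b+v)<0$) and can violate $\Sigma_t$, so the step as stated fails.

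The repair is available and is exactly the paper's one-line proof: choose $b$ \emph{after} $v$, realizing $p\cup\Sigma_{tv}$. This is consistent precisely because $p\in X$, whose defining property is that $p\cup\Sigma_s$ is realized for \emph{every} parameter $s$ --- this is the ingredient your write-up never invokes, and it is where the hypothesis $p\in X$ is actually used. With $b\vDash p\cup\Sigma_{tv}$ one has $|v|,|t|,1\ll|m\cdot b|$ for every nonzero $m\in\mathbb{Z}^r$ by the very definition of $\Sigma$, so your quantifier-elimination argument now goes through: congruence conditions are preserved because both $b$ and $b+v$ are divisible, and sign conditions because $|m\cdot v|\ll|m\cdot b|$. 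Hence $b,b+v\vDash p\cup\Sigma_{tv}\supset p\cup\Sigma_t$ while $f(b)=f(b+v)$, contradicting injectivity on $p\cup\Sigma_t$. (A minor point: your auxiliary integer $N$ is unnecessary, since $v$ already lies in the divisible group $(\cap_n nZ)^r$.)
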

\begin{proof}
If $K$ is the kernel of $f$ and $K\neq 0$, then if we take
$y\in K$ nonzero, divisible, and take $x\vDash p\cup \Sigma_{ty}$ 
then $x+y\vDash p\cup \Sigma_{ty}$ by Lemma \ref{type-lema-z}, so 
we conclude $f(x)=f(x+y)$ which contradicts the injectivity of $f$ in 
$p\cup\Sigma_t$.
\end{proof}
\begin{teo}\label{main-def-in-Z}
If $G$ is an abelian group definable in $(Z,+,<)$, then 
$G$ has a subgroup $H$ isomorphic as a definable group to $Z^n$ such that
$G/H$ is bounded.
\end{teo}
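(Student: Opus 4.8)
The plan is to set $r={\rm ubd}(G)$ and to produce a definable subgroup $H\cong Z^r$, so that the asserted $n$ is $r$; the bounded quotient will then come for free. If $r=0$ then $G$ is bounded by part~1 of the proposition computing ${\rm ubd}$, and $H=0$ works, so assume $r\geq 1$. First I would embed $G\subset Z^N$ and invoke the structure result for ${\rm ubd}$ to choose coordinates that separate an unbounded part, identified with $Z^r$ via the definable bijection $Z_{\geq 0}\cong Z$, from a bounded part. A generic element of $G$ is then displayed as $(x,c)$ with $x$ ranging over the unbounded directions and $c$ over a bounded set $C$; by cell decomposition and the proposition on affine normal forms, the group law, restricted to the regime where the unbounded coordinates are large and spread out, is affine in $x$.

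Next I would manufacture an idempotent in a space of types. Taking $\Sigma$ as in Lemma~\ref{disintegration}, so that a realization of $\Sigma_t$ is a tuple all of whose nonzero integer combinations dominate $|t|$ and $1$, that lemma verifies hypotheses 1, 2 and 3 of Lemma~\ref{definability}. Letting $S_d$ be the left cancellative semigroup structure induced by the group law on the generic part of $G$ and $C_d$ the bounded coordinates, Lemma~\ref{definability} produces a $0$-relatively definable $Y\subset X\times C$ with a $0$-pro-definable product making each $Y_d$ a semigroup, and its final clause supplies nonemptiness from the witness that $G$ is genuinely $r$-unbounded, namely an injective $Z^r\to G$ landing in the generic regime. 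Lemma~\ref{idempotent2} then yields an idempotent $e=(p,c)\in Y_d$.

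The heart of the matter is reading off the group structure from $e$. Computing $(a_1,c)(a_2,c)$ for $a_1\vDash p\cup\Sigma_d$ and $a_2\vDash p\cup\Sigma_{da_1}$, the unbounded coordinate of the product is an affine expression $Aa_1+Ba_2+c_0$ with $c_0$ determined by $c$. Demanding that this again realize $(p,c)$ (idempotency) together with associativity of the pro-definable product forces $A$ and $B$ to be commuting idempotent matrices, and, because $a_2$ dominates $a_1$, forces the leading matrix to act as the identity on the relevant type; this is precisely the associativity constraint that pins the generic structure, on the level of types, to the additive group $(Z^r,+)$, with the bounded coordinate locked at $c$.

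Finally I would globalize by differences, in the manner of a groupification: using the generic type $p$, I send a tuple to the difference in $G$ of two generic realizations whose unbounded parts differ by that tuple, obtaining a definable homomorphism $Z^r\to G$ whose restriction to divisible tuples is the type-definable morphism $(\cap_n nZ)^r\to G$ of Lemma~\ref{eventual-injectivity-implies-injectivity}. Injectivity on $p\cup\Sigma_t$ is immediate from the construction, so that lemma gives injectivity on divisibles; since a nontrivial definable subgroup of $Z^r$ is infinite and hence contains a nonzero divisible element, this upgrades to injectivity of $Z^r\to G$, and its image is the desired definable $H\cong Z^r$. The bounded quotient is then automatic: by the additivity $ {\rm ubd}(G)={\rm ubd}(H)+{\rm ubd}(G/H)$ (part~4) and ${\rm ubd}(H)={\rm ubd}(Z^r)=r={\rm ubd}(G)$, we get ${\rm ubd}(G/H)=0$, so $G/H$ is bounded by part~1. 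The step I expect to be the main obstacle is exactly this globalization: showing that the difference map is well defined independently of the base realization and of the bounded data, that it is genuinely definable rather than merely defined on the type, and that it assembles into a homomorphism whose image is an honest definable copy of $Z^r$.
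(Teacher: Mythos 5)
Your proposal follows the paper's own proof essentially step for step: the same ${\rm ubd}$ normal form $G\subset [0,a)^s\times Z_{\geq 0}^r$, the same use of Lemmas \ref{disintegration}, \ref{definability} and \ref{idempotent2} to extract an idempotent type, the same associativity computation forcing the affine coefficient matrices to be commuting idempotents and then the identity (the paper derives nonsingularity from left and right cancellativity of $G$ rather than from domination of $a_2$ over $a_1$, and then translates by $d$ with $d\equiv-\hat{p}$ so the shifted type $q$ implies divisibility), and the same difference construction $j(z)=i(x+z)\ominus i(x)$ on $(\cap_n nZ)^r$ with injectivity via Lemma \ref{eventual-injectivity-implies-injectivity}. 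The only divergence is cosmetic and occurs at the globalization step you flagged: the paper extends $j$ by compactness to an injective definable morphism on $(NZ)^r$ and composes with multiplication by $N$, whereas you propose upgrading injectivity via the (correct, by saturation) observation that a nontrivial definable subgroup of $Z^r$ contains a nonzero divisible element; note that your difference map is only defined on divisible tuples to begin with, so the compactness extension to $(NZ)^r$ is still needed before either injectivity argument applies.
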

\begin{proof}
Suppose $G\subset Z_{>0}^r \times [0,a)^s$ with $r={\rm ubd}(G)$, as in
Proposition \ref{ubdmain}. Let $t$ be a set of parameters over which $G$ is 
defined, so that
$G$ is a $t$-definable group, and without loss of generality $a$ is in $t$.
Say $G=G_t$, for $t\in T$. 
We may assume that the projection onto the last $s$ factors of 
$G_{t'}$ is bounded for all
$t'\in T$, and that ${\rm ubd} (G_{t'})=r$ for all $t'\in T$.
To distinguish it from other sums we will denote the product of $G$ by
$\oplus$ and the inverse by $\ominus$.

Take $\Sigma$ and $X$ as in the situation \ref{types-at-infinity}.
By Lemma \ref{idempotent3} we have that there exist
a type $p\in X$ and an element $b\in [0,a)^s$ 
such that if $x\vDash p\cup\Sigma_t$
and $y\vDash p\cup\Sigma_{(t,x)}$ then $(x,b),(y,b)\in G$ and 
$(x,b)\oplus(y,b)=(z,b)$ satisfies $z\vDash p\cup\Sigma_{(t,x)}$. 

We shall give a type-definable function 
$i:q\cup\Sigma_t\to G$ defined on a
type $q\in X$ that implies $\bigcap_n (nZ)^r$, 
and such that $i(x+y)=i(x)\oplus i(y)$ for
$x\vDash q\cup\Sigma_t$ and $y\vDash q\cup\Sigma_{(t,x)}$.
Start with $j:p\cup\Sigma_t\to G$, $j(x)=(x,b)$.
Denote $\hat{p}$ the element in $\hat{\mathbb{Z}}^r$ such that
if $x\vDash p$ then $\hat{x}=\hat{p}$.
We can write $j(x)\oplus j(y)=(Ax+By+d,b)$ where $A,B$ are 
matrices with rational entries and $d$ is a $tb$-definable constant.
From the identity 
$(j(x)\oplus j(y))\oplus j(z)=j(x)\oplus (j(y)\oplus j(z))$
for all $x\vDash p\cup\Sigma_t, y\vDash p\cup\Sigma_{(t,x)},
z\vDash p\cup\Sigma_{(t,x,y)}$,
we obtain $A^2x+ABy+Bz+Ad+d=Ax+BAy+B^2z+Bd+d$.
Then $A^2=A, B^2=B, BA=AB, Ad=Bd$.
As $G$ is a group it is left and right cancellative so 
$A$ and $B$ must be
nonsingular rational matrices, so $A=B=1$.
Reducing mod $\hat{\mathbb{Z}}$ obtain $d\equiv -\hat{p}$,
indeed from $(x,b)\oplus (y,b)=(x+y+d,b)$, for 
$x\vDash p\cup \Sigma_{t}$ and $y\vDash p\cup\Sigma_{(t,x)}$ we have that
$x+y+d\vDash p$ and in particular its reduction mod $\hat{\mathbb{Z}}$
is $\hat{p}$, and on the other hand this reduction is also
$\hat{x}+\hat{y}+\hat{d}=2\hat{p}+\hat{d}$.
Taking $i(x)=j(x-d)$ we obtain the identity
\begin{equation}\label{local-group-homo-eq}
i(x+y)=i(x)\oplus i(y)
\end{equation}
for all $x\vDash q\cup\Sigma_t$ and 
$y\vDash q\cup\Sigma_{(t,x)}$, where
$q=p+d$. Indeed for $x\vDash q\cup \Sigma_t$ and 
$y\vDash q\cup\Sigma_{(t,x)}$ we have
$x=x'+d$ and $y=y'+d$ for $x'\vDash p\cup\Sigma_t$ and 
$y'\vDash p\cup\Sigma_{(t,x')}$, and then we get
$i(x)\oplus i(y)=j(x-d)\oplus j(y-d)=j(x')\oplus j(y')=
(x'+y'+d,b)=(x+y-d,b)=i(x+y)$.

Define $k:(\bigcap_n nZ)^r\to G$ by $k(z)=i(x+z)\ominus i(x)$ 
for $x\vDash q\cup\Sigma_{(t,z)}$. 
One has to prove $k$ is a well defined group morphism.
Suppose $x,y\vDash q\cup\Sigma_{(t,z)}$. Take
$w\vDash q\cup\Sigma_{(t,z,x,y)}$.
Then 
$i(x+z+w)\ominus i(x+w)=i(x+z)\ominus i(x)$, applying 
\ref{local-group-homo-eq} twice.
On the other hand if $x+w=y+w'$ then $w'\vDash \Sigma_{(t,z,x,y)}$
and so, similarly, $i(y+z+w')\ominus i(y+w')=i(y+z)\ominus i(y)$, so
$k$ is well-defined.

Now
if $z,w\in(\bigcap_n nZ)^r$ and
$x\vDash q\cup \Sigma_{(t,z,w)},y\vDash q\cup\Sigma_{(t,z,w,x)}$, then
\[k(z)\oplus k(w)=i(x+z)\oplus i(y+w)\ominus (i(x)\oplus i(y))=i(x+y+z+w)\ominus i(x+y)=k(z+w).\]
 In the first and last equation we use the definition of $k$ and in the second
we use \ref{local-group-homo-eq} twice.
$k$ restricts to $i$ on $x\vDash q\cup\Sigma_t$, so $k$ is injective, by Lemma \ref{eventual-injectivity-implies-injectivity}.
By compactness $k$ extends to an injective definable group morphism
$(NZ)^r\to G$, and composing with multiplication by $N$
we obtain an injective group morphism
$k':Z^r\to G$.
Then $G/k'(Z^r)$ is bounded, by Proposition \ref{ubd}.
\end{proof}
The next lemma is implicit in \cite{definable-bounded-groups-in-Z},
but for convenience we include a proof.

By a $\lor$-definable group we mean a 
$\lor$-definable set $G$ with a group operation $G\times G\to G$
such that the for every $X,Y\subset G$ definable
the product restricted to $X\times Y$
is definable. In other words the product is a map of ind-definable
sets.
\begin{plem}\label{group-morphism-extension-R}
In an arbitrary theory.
Let $G$ be a $\lor$-definable abelian group
 and $T\subset G$ a type-definable subgroup.
 Let $H$ be a $\lor$-definable group
and $\phi:T\to H$ a type-definable group morphism.
Suppose given a surjective group morphism
$\pi:G\to \mathbb{R}^s$ with kernel $T$. Assume that:
\begin{enumerate}
\item If $T\subset U\subset G$ is definable then there is 
$0\in V\subset \mathbb{R}^s$ open such that
$\pi^{-1}(V)\subset U$.
\item If $T\subset U\subset G$ is definable then $\pi(U)$ is bounded.
\end{enumerate}
Then $\phi$ extends to a unique group morphism $\phi:G\to H$
such that the restriction of $\phi$ to a definable subset is definable.
\end{plem}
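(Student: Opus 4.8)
The plan is to first extend $\phi$ to a definable map on a definable neighbourhood of $T$, to observe that this map is automatically a \emph{local} homomorphism near the identity, and then to globalize it using the divisibility of $\mathbb{R}^s$; uniqueness will come from the same local-to-global mechanism. Throughout I will use that the image of any homomorphism out of the abelian $G$ is abelian, so I may write $H$ additively on the relevant image and ignore noncommutativity.

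First I would produce the local homomorphism. Since $\phi$ is type-definable and $H=\bigcup_k H_k$ is $\lor$-definable, a routine compactness argument puts $\phi(T)$ inside a single $H_k$ and extends $\phi$ to a definable $f\colon W\to H_k$ on some definable $W$ with $T\subset W\subset G$ and $f|_T=\phi$. Writing $T=\bigcap_i T_i$ as a decreasing intersection of definable sets, the defect set $N=\{(g,g')\in W\times W\mid g+g'\in W,\ f(g+g')=f(g)+f(g')\}$ is definable and contains $T\times T$, precisely because $\phi$ is a homomorphism. By saturation $T_{i_0}\times T_{i_0}\subset N$ for some $i_0$; putting $U=T_{i_0}$ and applying hypothesis $1$ to $U$ and to $W$, I can choose an open symmetric ball $V\ni 0$ with $\pi^{-1}(V)\subset U$ and $V+V\subset V_1$ where $\pi^{-1}(V_1)\subset W$. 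Then $\lambda:=f|_{\pi^{-1}(V)}$ satisfies $\lambda(g+g')=\lambda(g)+\lambda(g')$ whenever $\pi(g),\pi(g')\in V$: an exact local homomorphism extending $\phi$.

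Next I would globalize. For $g\in G$ choose $n$ with $\pi(g)/n\in V$ and $g_n$ with $\pi(g_n)=\pi(g)/n$ (surjectivity), and set $\tilde\phi(g)=n\lambda(g_n)-\phi(ng_n-g)$, noting $ng_n-g\in T$. Because $V$ is a convex symmetric ball, the multiples $k\pi(g)/(nm)$ all lie in $V$, so the local additivity of $\lambda$ together with the additivity of $\phi$ on $T$ makes this value independent of $n$ and of $g_n$, and makes $\tilde\phi$ a homomorphism with $\tilde\phi|_T=\phi$. For definability on a definable $A\subset G$, hypothesis $2$ applied to $A\cup U$ bounds $\pi(A)$, so a single $n_0$ works and $\pi(g)/n_0\in V\subset\pi(U)$ supplies a witness $u\in U$ with $n_0u-g\in T\subset U$; rewriting $\tilde\phi(g)=n_0\lambda(u)-\lambda(n_0u-g)$ and relaxing the type-definable condition $n_0u-g\in T$ to $n_0u-g\in T_i$ enlarges the admissible witnesses, but for $i$ large the local additivity of $\lambda$ forces all of them to give the same value (two witnesses differ by an element of small $\pi$-image). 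Hence the graph of $\tilde\phi|_A$ is the definable set obtained by existentially quantifying $u$ over $U$, and $\tilde\phi|_A$ is definable.

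For uniqueness, if $\psi$ is any extension whose restrictions are definable, then $\{g\in U\mid\psi(g)=f(g)\}$ is definable and contains $T$, so by hypothesis $1$ it contains some $\pi^{-1}(V')$; thus $\psi$ agrees with $\lambda$ near the identity, and being a homomorphism it must satisfy the same divisibility formula, forcing $\psi=\tilde\phi$ everywhere. I expect the hard part to be exactly the interface between the type-definable $T$ and genuine definability: it is not a priori clear that $f$ can be chosen an exact local homomorphism, nor that the divisibility formula defines a \emph{definable} map rather than merely a type-definable one. Both difficulties are resolved by the saturation step that places $T_{i_0}\times T_{i_0}$ inside $N$ and by the observation that over-approximating $T$ by the $T_i$ leaves the value of the formula unchanged; the remaining verifications (well-definedness, the homomorphism property, and the globalization over $\mathbb{R}^s$) are the routine ``local morphism to global morphism on $\mathbb{R}$'' calculations.
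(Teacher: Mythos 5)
Your proposal is correct and follows essentially the same route as the paper's proof: a compactness-extracted local homomorphism on a definable neighbourhood of $T$, the gluing formula $\tilde\phi(g)=n\lambda(g_n)+\phi(g-ng_n)$ made coherent via convexity of $V$ and divisibility of $\mathbb{R}^s$, definability on a definable $A$ by fixing a single $n_0$ through hypothesis 2, and uniqueness through hypothesis 1 plus the fact that $G=\bigcup_n nU_1$.

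One caution on the definability step: your parenthetical justification that two relaxed witnesses ``differ by an element of small $\pi$-image'' is not supported by the hypotheses --- hypothesis 2 only makes $\pi(T_i)$ \emph{bounded}, never small, and nothing guarantees a definable set between $T$ and $\pi^{-1}(B_\epsilon)$. The claim it supports is nevertheless true, but it should be proved by the same saturation device you use elsewhere (and which the paper invokes as ``by compactness $\phi_N$ is type-definable''): the definable sets $B_i$ of triples $(g,u,u')$ of $T_i$-witnesses yielding different values form a decreasing chain whose intersection is empty by your well-definedness over $T$, so some $B_i$ is already empty.
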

\begin{proof}
First uniqueness.
If $T\subset U_0\subset G$ is definable and $\phi_1,\phi_2:G\to H$
extend $\phi$, then $\phi_i|_{U_0}$ are definable and equal when 
restricted to $T$. By compactness there is $T\subset U_1\subset U_0$
definable with $\phi_1|_{U_1}=\phi_2|_{U_1}$.
By 1) $\cup_{n\geq 0}nU_1=G$, so $\phi_1=\phi_2$.

Now existence.
Choose $U$ definable symmetric such that $\phi$ extends
to a function $\phi:U\to H$ such that $\phi(x+y)=\phi(x)\phi(y)$
for all $x,y\in U$.
Replacing $H$ by the group generated by $\phi(U)$ we may assume
$H$ is abelian, and we denote it additively.

Take $0\in B$ a convex open such that $\pi^{-1}B\subset U$.
For $N\in\mathbb{Z}_{>0}$ define
$A_N=\{x\in G\mid \text{ there exists }y\in U, z\in T, Ny+z=x\}$.
Then $A_N\subset (N+1)U$ and by compactness $A_n$ is type-definable.
Define $\phi_N:A_N\to H$ by $\phi_N(x)=N\phi(y)+\phi(z)$.
We see this is well defined, if 
$Ny_1+z_1=Ny_2+z_2$ then $N\pi(y_1)=N\pi(y_2)$ so
$y_2=y_1+v$ for $v\in T$. From $Ny_1+z_1=Ny_2+z_2$ obtain
$z_1=Nv+z_2$. Then $\phi(y_2)=\phi(y_1)+\phi(v)$ and
$\phi(z_1)=N\phi(v)+\phi(z_2)$, from where
$N\phi(y_2)+\phi(z_2)=N\phi(y_1)+\phi(z_1)$, which is what we 
wanted to see.
By compactness $\phi_N$ is type-definable.
If $x$ is such that $\frac{1}{M}\pi(x)\in B$ then for $N,N'\geq M$,
$x\in A_N\cap A_{N'}$ and
$\phi_N(x)=\phi_N'(x)$.
Indeed, considering $(N,NN'), (N',NN')$ without loss
$N'=NN_1$. Take $y\in G$ and $z\in T$ such that
$x=NN_1y+z$. Then 
 $y,2y,\dots,N_1y\in \pi^{-1}(B)\subset U$, so 
$\phi(N_1y)=N_1\phi(y)$, and $\phi_N(x)=\phi_{N'}(x)$, as required.
Define $\psi(x)=\phi_N(x)$ for $N$  sufficiently large,
(Which clearly extends $T\to H$).
Now if $x,y\in G$ and $N$ is sufficiently large
then $x=Nx'+z_1, y=Ny'+z_2$ and $x+y=N(x'+y')+z_1+z_2$
with $x',y',x'+y'\in U$ and $z_1,z_2\in T$. From here
$\psi(x+y)=\psi(x)+\psi(y)$.
Finally if $X$ is definable and $\frac{1}{N}\pi(X)\subset B$, then
$\psi=\phi_N$ on $X$ so we have the definability.
\end{proof}
We recall now one of the main results of \cite{definable-bounded-groups-in-Z}.
To state the result we need some notation. 
For $a\in Z$ with $1\ll a$ denote
\[O(a)=\{x\in Z\mid \text{ there exists }n\in\mathbb{Z}_{>0},|x|<na\}\]
and
\[o(a)=\{x\in Z\mid \text{ for all }n\in\mathbb{Z}_{>0}, n|x|<a\}.\]
For $a\in Z^s$ with $1\ll a_i$, denote
$O(a)=O(a_1)\times\cdots\times O(a_s)$
and $o(a)=o(a_1)\times\cdots\times o(a_s)$.
As remarked in
\cite{one-dimensional-p-adic}, Proposition 3.2,
 $O(a)/o(a)\cong \mathbb{R}^s$, and this isomorphism
is easily seen to satisfy the hypotheses of the Lemma \ref{group-morphism-extension-R}.
Recall that a subgroup $\Lambda\subset \mathbb{R}^s$ is called
a lattice if it is discrete with the subspace topology. Equivalently
if it is generated as a group by linearly independent elements of the
$\mathbb{R}$-vector space $\mathbb{R}^s$.
The lattice is called full if the $\mathbb{R}$-linear span is
$\mathbb{R}^s$, equivalently if it is generated as a group by
a basis.
A subgroup $\Lambda\subset O(a)$ is called a local lattice
if $\Lambda\cap o(a)=0$ and $\pi(\Lambda)\subset \mathbb{R}^s$ is a
full lattice. In this case $\Lambda=\bigoplus_i \mathbb{Z}b_i$ 
for $\{\pi(b_i)\}_i$ which form a basis of $\mathbb{R}^s$.
Note that $\Lambda$ satisfies that:
\begin{enumerate}
\item $X\cap \Lambda$ is finite
for all $X\subset O(a)$ definable
\item there exists $X_0$ definable
such that $X_0+\Lambda=O(a)$.
\end{enumerate}
Indeed, $\pi(X)$ is compact and
$X_0$ is any such that $o(a)\subset X_0$ and $\pi(X_0)$ contains
a parallelogram for $\pi(\Lambda)$.
Note that $\pi(\Pi_i[-Na_i,Na_i])$ contains $[-N,N]^s$ so this $X_0$ exists.
This implies that we can consider $O(a)/\Lambda$ as a definable 
group. Indeed in $X_0$ there is the equivalence relation given
by the fibers of $X_0\to O(a)/\Lambda$ which is definable by 1.
By definable choice there is $X_1\subset X_0$ definable
such that the restriction of $O(a)\to O(a)/\Lambda$ to $X_1$ 
is bijective. The sum on $X_1$ which makes this bijection 
a group isomorphism is definable by 1.
Indeed this sum $\oplus$ is defined by $x\oplus y=x+y-z$ where 
$z\in\Lambda\cap 3(X_1\cup -X_1)$ is the unique element of 
$\Lambda$ such that $x+y-z\in X_1$. 

The group $O(a)/\Lambda$ considered as a definable group is denoted
$C(a,b)$ (for $\Lambda=\bigoplus_i\mathbb{Z}b_i$).

In the next theorem we also need the groups of the form
$Z^r\times O(a)/\bigoplus_i\mathbb{Z}(v_i,b_i)$ for $v_i\in Z^r$ and 
$\sum_i\mathbb{Z}b_i$ a local lattice of $O(a)$. If $X_1$ is a definable 
set as found before, that is, a set such that the restriction of 
$O(a)\to C(a,b)$ to $X_1$ is a bijection,
then we consider 
$Z^r\times O(a)/\bigoplus_i \mathbb{Z}(v_i,b_i)$ a definable group with underlying set $Z^r\times X_1$. The sum here is given by 
$(t,x)\oplus (w,y)=(t,x)+(w,y)-(f(z),z)$ where 
$z\in \Lambda\cap 3(X_1\cup -X_1)$ is the unique element of $\Lambda$ such that
$x+y-z\in X_1$ and $f:\Lambda\to Z^r$ is the group morphism that sends $b_i$
to $v_i$.

In \cite{definable-bounded-groups-in-Z} it is proven that
every bounded definable group has a definable finite index subgroup
isomorphic as a definable group to $C(a,b)$ for some $a,b$ as before.
\begin{teo}\label{main-def-in-Z-2}
If $G$ is a group definable in $(Z,+,<)$, then $G$ has a finite 
index definable subgroup isomorphic as a definable group to 
$Z^r\times O(a)/\bigoplus_i\mathbb{Z}(v_i,b_i)$,
for $\sum_i \mathbb{Z}b_i$ a local lattice of $O(a)$ and
$v_i\in Z^r$.
\end{teo}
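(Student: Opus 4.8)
The plan is to use the results already established to reduce to a single group extension, and then to split that extension by constructing a definable group section. First I would arrange a convenient extension. Since groups definable in $(Z,+,<)$ are abelian-by-finite, I pass to a finite-index definable abelian subgroup and assume $G$ is abelian. By Theorem \ref{main-def-in-Z} there is a definable $H\cong Z^r$ with $G/H$ bounded, and by the classification of bounded definable groups recalled above, $G/H$ has a finite-index subgroup isomorphic to $C(a,b)=O(a)/\Lambda$ with $\Lambda=\bigoplus_i\mathbb{Z}b_i$ a local lattice. Pulling this subgroup back along $G\to G/H$ and replacing $G$ by the resulting finite-index definable subgroup, I obtain a short exact sequence of definable groups $0\to Z^r\to G\xrightarrow{\pi}O(a)/\Lambda\to 0$, and it suffices to treat this $G$.

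The key reduction is that it is enough to produce a homomorphism $\psi:O(a)\to G$, definable on definable subsets, lifting the quotient $q:O(a)\to O(a)/\Lambda$, i.e.\ with $\pi\psi=q$. Indeed, since each $b_i\in\Lambda=\ker q$ we get $\psi(b_i)\in\ker\pi=Z^r$, so setting $v_i=-\psi(b_i)$ the map $(z,w)\mapsto z+\psi(w)$ is a surjective homomorphism $Z^r\times O(a)\to G$ (surjective because $\operatorname{im}\psi$ covers $G/H$) whose kernel is exactly $\bigoplus_i\mathbb{Z}(v_i,b_i)$: from $z+\psi(w)=0$ one gets $w=\sum n_ib_i\in\Lambda$ and then $z=\sum n_iv_i$. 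As $\bigoplus_i\mathbb{Z}(v_i,b_i)$ is again a local lattice in the $O(a)$-direction, the quotient is a definable group and $\psi$ descends to the desired definable isomorphism $G\cong Z^r\times O(a)/\bigoplus_i\mathbb{Z}(v_i,b_i)$.

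To build $\psi$ I would follow the cohomological route sketched in the introduction. Choose a definable set-section $s:O(a)\to G$ of $\pi$ (via a definable transversal of $\Lambda$ as in the construction of $C(a,b)$), with definable $2$-cocycle $c(w_1,w_2)=s(w_1)+s(w_2)-s(w_1+w_2)\in Z^r$. Running the idempotent machinery of Lemmas \ref{idempotent2}, \ref{definability} and \ref{disintegration} exactly as in Theorem \ref{main-def-in-Z}, but now recording $c$, I pass to a complete type $p\in X$ of generic, mutually $\Sigma$-independent elements on which, by cell decomposition, $c$ is an affine function. Substituting the affine form into the cocycle identity forces its linear part to vanish, so $c$ is constant and hence a coboundary on the type; correcting $s$ by it gives a local homomorphic lift $\psi_0$ with $\psi_0(x\oplus y)=\psi_0(x)\oplus\psi_0(y)$ and $\pi\psi_0=q$ for independent $x,y$. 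Taking differences $\phi(z)=\psi_0(x+z)\ominus\psi_0(x)$ as in Theorem \ref{main-def-in-Z} produces a genuine type-definable homomorphic lift of $q$ on the infinitesimals $o(a)$, and Lemma \ref{group-morphism-extension-R}—whose hypotheses hold for $O(a)\to O(a)/o(a)\cong\mathbb{R}^s$—extends it to $\psi:O(a)\to G$. That $\pi\psi=q$ globally (not merely on $p$) then follows from the uniqueness clause of Lemma \ref{group-morphism-extension-R}, since $\pi\psi$ and $q$ are two such extensions agreeing on $o(a)$.

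The \emph{main obstacle} I anticipate is precisely the construction of the full section: the affine-cocycle argument and the differences only control $\psi_0$ on generic divisible elements, whereas Lemma \ref{group-morphism-extension-R} requires the local morphism on all of the kernel $o(a)$. One must therefore arrange the non-divisible (profinite) directions of $O(a)$ and the divisibility type of $p$ so that the differenced homomorphism is genuinely defined on all of $o(a)$ and the quotient appearing in the extension lemma is exactly $\mathbb{R}^s$. Everything else—the reductions to the extension and the kernel computation identifying the isomorphism type—is routine.
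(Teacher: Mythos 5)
Your route is essentially the paper's: reduce to abelian $G$, get the extension $0\to Z^r\to G\to O(a)/\Lambda\to 0$ via Theorem \ref{main-def-in-Z} and the classification of bounded groups, trivialize the definable $2$-cocycle of a definable section on a well-chosen complete type where it is affine, pass to a genuine homomorphism by differencing, extend from $o(a)$ to $O(a)$ by Lemma \ref{group-morphism-extension-R}, use its uniqueness clause to get $\pi\psi=q$, and identify $G$ with $Z^r\times O(a)/\bigoplus_i\mathbb{Z}(v_i,b_i)$; your kernel and surjectivity computation is exactly the paper's cocartesian-square diagram chase. However, the ``main obstacle'' you flag and leave open is a genuine gap, and it is precisely the step the paper has to solve with a specific device you do not supply. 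The affine-cocycle argument kills $g$ only on a complete type $X$ of \emph{divisible} generic pairs, so compactness yields only an $N$ such that $s(x+y)=s(x)+s(y)$ whenever $(x,y)\in Y$ and $N\mid x,y$; the differenced map is then defined only on divisible directions, and Lemma \ref{group-morphism-extension-R} demands a morphism on the whole kernel $o(a)$ of $O(a)\to O(a)/o(a)\cong\mathbb{R}^s$. The paper closes this by replacing $G$ with the finite-index definable subgroup $q^{-1}((NO(a)+\Lambda)/\Lambda)$ and $\Lambda$ with its preimage under multiplication by $N$ on $O(a)$ --- legitimate because the theorem only asserts that a finite-index subgroup of $G$ has the stated form --- thereby reducing to $N=1$, after which $s'(x)=s(x+y)-s(y)$, for $y\in W$ with $|x_i|\ll y_i$, is well defined and homomorphic on all of $o(a)$ with no divisibility hypothesis. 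Without this (or an equivalent rescaling) your construction stalls exactly where you predict.

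A secondary inaccuracy: you do not need, and the paper does not use, the idempotent machinery of Lemmas \ref{idempotent2} and \ref{definability} here. That machinery was needed in Theorem \ref{main-def-in-Z} to manufacture a coherent product on a type; in the present proof a definable section $s$ already exists, and the paper instead directly constructs the type $X\subset o(a)\times o(a)$ as (in effect) a tensor of invariant types adapted to the archimedean stratification of $o(a)$: one orders the coordinates so that $o(a_1)=\cdots=o(a_{n_1})\subsetneq\cdots$, takes $W$ to consist of $x$ generic over the $t$-definable points of each $o(a_{n_i})$, and requires $x^i\ll y^i$ blockwise. This finer condition, rather than plain $\Sigma$-independence from Lemma \ref{disintegration}, is what makes the cocycle identity degenerate to $nx=mz$ (forcing $n=m=0$) and guarantees room for the differencing witness $y$ inside the bounded set $o(a)$.
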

\begin{proof}
As mentioned before, in \cite{definable-bounded-groups-in-Z}
it is proven that $G$ is abelian-by-finite, so without loss of generality
$G$ is abelian.
By Proposition \ref{main-def-in-Z} we have a short exact
sequence $0\to Z^r\to G\to B\to 0$, with $B$ bounded. 
Denote $i:Z^r\to G$, $q:G\to B$.
By the fact mentioned before this theorem me may take
$B=C(a,b)$. By definable choice we may take
$B\to G$ a set-theoretic definable section.
Denote $s$ the composition $O(a)\to B\to G$.
From this we obtain a definable inhomogeneous 2-cocycle
$g:O(a)^2\to Z^r$ defined by $i(g(x,y))=s(x+y)-s(x)-s(y)$.
This function satisfies 
$g(x,y)+g(x+y,z)=g(y,z)+g(x,y+z)$, called the cocycle condition.

Suppose everything is defined over $t$.
We may assume that $o(a_1)=\cdots=o(a_{n_1})\subsetneq o(a_{n_1+1})
=\cdots=o(a_{n_2})\subsetneq\cdots \subsetneq o(a_{n_{l-1}+1})=\cdots=
o(a_s)$,
with $n_0=0<n_1<\cdots<n_{l-1}<n_l=s$. If $x\in o(a)$ then 
denote $x^i=(x_{n_{i-1}+1},\cdots,x_{n_i})$ for $i=1,\cdots,l$.
Define $W$ to be $x\in W$ if and only if $x\in o(a)$ and
$t'\ll x^i$ for every
$t'$ $t$-definable in $o(a_{n_i})$, so
in particular $a_j\ll x^i$ for 
$j\leq n_{i-1}$,
 and $1\ll x$.
Take $Y$ to be the elements $(x,y)$, $x,y\in o(a)$ such that
$x,y\in W$,
and $x^i\ll y^i$.
 As an aside $Y$ comes from a tensor product of invariant types.
We take $X\subset o(a)\times o(a)$ a complete $t$-type of elements
determined by $(x,y)\in X$ if and only if $(x,y)\in Y$ and $x,y$ are divisible.

Note that as $X$ is a complete type over $t$, 
we can write $g(x,y)=Ax+By+d$ for $(x,y)\in X$, 
for some $A,B$ matrices with rational entries and $d$ a 
$t$-definable constant.
Take $x,y,z\in o(a)$ with $(x,y),(y,z)\in X$.
From the cocycle condition obtain
$Ax=Bz$ from which one gets
$A=0=B$. Replacing $s$ by $s-i(d)$ we may assume $d=0$.
By compactness there is $N$ such that if 
$N|x,y$ and $(x,y)\in Y$ then
 $s(x+y)=s(x)+s(y)$.
Replacing $G$ by $q^{-1}((NO(a)+\Lambda)/\Lambda)$ and
$\Lambda$ by the inverse image under multiplication by $N$, 
$O(a)\to O(a)$, we may assume that $N=1$.
Define $s':o(a)\to G$ by
 $s'(x)=s(x+y)-s(y)$ for $y\in W$, with $|x_i|\ll y_i$.
We see that $s'$ is a well defined group morphism.
For this note that
for $y,y'\in W$ there is $w\in W$ such that $(y,w),(y',w)\in Y$.
The rest follows
as in the last two paragraphs of the proof of
Proposition \ref{main-def-in-Z}.
Note that $s'$ composed with $G\to B$ is the canonical projection
$O(a)\to B$ restricted to $o(a)$.
We obtain now a group morphism
$s'':O(a)\to G$, by Lemma \ref{group-morphism-extension-R}.
By uniqueness of the morphism of that lemma we find that the composition
$O(a)\to G\to B$ equals the canonical projection $O(a)\to B$, because
they restrict to the same mapping on $o(a)$.
Finally if one considers the diagram

\begin{tikzcd}
0\arrow{r} & Z^r\arrow{r} & G\arrow{r}& B\arrow{r} & 0\\
0\arrow{r} & \Lambda\arrow{r}\arrow[u,dotted] & O(a) \arrow{r}
\arrow{u} & B\arrow{r}\arrow{u}{1} & 0,
\end{tikzcd}

then 
there is unique dotted arrow which makes the diagram commute
and a diagram chase shows that the left square is cocartesian,
which is what we had to prove. 
Indeed, the group $\Lambda$ is a free abelian group so a 
map $\Lambda\to Z^r$ is the same thing as choosing some elements in $Z^r$, which
are the image of the basis of $\Lambda$. 
These elements are $-v_i$, with $v_i$ satisfying the statement.
\end{proof}
\bibliographystyle{plain}
\bibliography{definable-group-in-z}
\end{document}